\newtheorem{thm}{Theorem}[section]
\newtheorem{cor}[thm]{Corollary}
\newtheorem{definition}[thm]{Definition}
\newtheorem{conjecture}[thm]{Conjecture}
\newtheorem{lemma}[thm]{Lemma}
\title{Some inequalities for the Tutte polynomial}
\author{Laura E. Chavez-Lomel\'{\i}\thanks{Universidad Autonoma
    Metropolitana, Unidad Azcapotzalco,  Avenida San Pablo  180,
    colonia Reynosa Tamaulipas, Delegaci\'on
    Azcapotzalco. e-mail:lelc@correo.azc.uam.mx},
      Criel Merino\thanks{Instituto de Matem\'aticas, Universidad
      Nacional Aut\'onoma de M\'exico, Area de la Investigaci\'on
      Cient\'{\i}fica, Circuito Exterior, C.U. Coyoac\'an 04510,
      M\'exico,D.F. M\'exico. e-mail:merino@matem.unam.mx. Supported
      by Conacyt of M\'exico Proyect 83977},
       Steven D. Noble\thanks{Department of Mathematical Sciences,
Brunel University, Kingston Lane, Uxbridge UB8 3PH,
U.K. e-mail:steven.noble@brunel.ac.uk},
      \and Marcelino Ram\'{\i}rez-Iba\~nez\thanks{Instituto de Matem\'aticas, Universidad
      Nacional Aut\'onoma de M\'exico, Area de la Investigaci\'on
      Cient\'{\i}fica, Circuito Exterior, C.U. Coyoac\'an 04510,
      M\'exico,D.F. M\'exico. e-mail: marchelino@gmail.com}
     }
\date{\today}
\begin{document}
\maketitle
\thispagestyle{empty}



\begin{abstract}

 We prove that  the Tutte polynomial of a coloopless paving matroid is
 convex along the portions of the line segments $x+y=p$ lying in the
 positive quadrant. Every coloopless paving matroids is in the class
 of
 matroids which contain two disjoint bases or whose ground
 set is the union of two bases of  $M^*$. For this latter class we
 give a
 proof that $T_{M}(a,a)\leq \max\{T_{M}(2a,0),T_{M}(0,2a)\}$ for
 $a\geq 2$. We conjecture that  $T_{M}(1,1)\leq
 \max\{T_{M}(2,0),T_{M}(0,2)\}$ for the same class of matroids. We
 also
 prove this conjecture for some families of graphs and
 matroids.
 \end{abstract}

\section{Introduction}
\label{sec:intro}

 The Tutte polynomial is a two variable polynomial which can be
 defined for  a graph $G$ or, more generally, a matroid $M$. The Tutte
 polynomial  has many interesting combinatorial interpretations
 when evaluated at different points ($x$, $y$) and along several
 algebraic
 curves. For example, for a graph $G$, the Tutte polynomial along the
 line $y=0$ is the chromatic polynomial, after a suitable change of
 variable and multiplication by an easy term. Similarly, we can
 get the flow polynomial of a graph and the all terminal reliability
 of a network and the partition function of the $Q$-state Potts
 model. When considering  a GF($q$)-representable matroid, the Tutte
 polynomial gives us the weight enumerator of  linear codes over
 GF($q$) associated to $M$. All the necessary background on the Tutte
 polynomial is contained in Section~\ref{sec:prelim}.

 It is well-known~\cite{Bjorner} that the Tutte polynomial of a matroid $M$
 has an expansion
\[ T_M(x,y) = \sum_{i,j} t_{ij}x^iy^j,\]
in which each coefficient $t_{ij}$ is non-negative. Consequently, for
$m\geq 0$ and for any $b$, if $(x,y)$ lies on the portion of the line
$y=mx+b$ lying in the positive quadrant, then $T_M$ increases as $x$
increases. The simplicity of the behaviour of $T$ along lines with
positive gradient suggests the study of the behaviour of $T_M$ along
lines with negative gradient in the positive quadrant. Merino and
Welsh~\cite{MrWl} were the first to consider this and were particularly
interested in resolving the question of whether the Tutte polynomial
is convex along the portion of the line $x+y=2$ lying in the positive
quadrant. They made the following intriguing conjecture.

\begin{conjecture}\label{old}
Let $G$ be a 2-connected graph with no loops. Then
\begin{equation}
\max\{T_G(2,0),T_G(0,2)\} \geq T_G(1,1). \label{eq:MW}
\end{equation}
\end{conjecture}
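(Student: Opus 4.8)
The plan is to deduce the inequality from a convexity statement. Observe that $(1,1)$ is the midpoint of the segment joining $(2,0)$ and $(0,2)$ on the line $x+y=2$. Hence, writing $f(t):=T_G(1+t,1-t)$ for $t\in[-1,1]$, if $f$ is convex on this interval then $f(0)\le\tfrac12\bigl(f(-1)+f(1)\bigr)$, which reads
\[
T_G(1,1)\;\le\;\tfrac12\bigl(T_G(0,2)+T_G(2,0)\bigr)\;\le\;\max\{T_G(2,0),T_G(0,2)\},
\]
and we are done. So I would first aim to establish midpoint convexity of $f$ (ideally full convexity along the segment), working in the loopless $2$-connected setting where neither endpoint evaluation vanishes.

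It helps to recall the combinatorial meaning of the three evaluations: $T_G(1,1)$ counts the spanning trees of $G$, $T_G(2,0)$ counts its acyclic orientations, and $T_G(0,2)$ counts its totally cyclic orientations, so the claim asserts that the number of spanning trees never exceeds the larger of these two orientation counts. To prove it I would induct by deletion and contraction. For an edge $e$ that is neither a loop nor a bridge, every one of the three quantities is additive, so in particular
\[
T_G(1,1)=T_{G\setminus e}(1,1)+T_{G/e}(1,1).
\]
If the inductive bounds for $G\setminus e$ and $G/e$ are both attained by their $(2,0)$-evaluations (respectively both by their $(0,2)$-evaluations), they add to yield the bound for $G$ through $T_G(2,0)$ (respectively $T_G(0,2)$), and these cases close up cleanly.

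The hard part will be the two \emph{mixed} cases, in which one of $G\setminus e,\ G/e$ is controlled by its acyclic count and the other by its totally cyclic count: there the maximum of the sums need not dominate the sum of the maxima, and the naive induction collapses. This is aggravated because neither operation preserves the hypotheses: deleting an edge of a $2$-connected graph may create a bridge or disconnect it, while contracting may turn parallel edges into loops. These are precisely the situations in which the inequality is most delicate, since a coloop forces $T_G(0,2)=0$ and a loop forces $T_G(2,0)=0$; hence the inductive statement must be strengthened to keep the two orientation counts under simultaneous control, for instance by carrying an auxiliary comparison of $T(2,0)$ with $T(0,2)$, or by peeling off entire series and parallel classes at once via an ear decomposition of the $2$-connected graph. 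Finally, the term-by-term route to convexity fails because
\[
\frac{d^2}{dt^2}(1+t)^i(1-t)^j
= i(i-1)(1+t)^{i-2}(1-t)^j + j(j-1)(1+t)^i(1-t)^{j-2} - 2ij(1+t)^{i-1}(1-t)^{j-1}
\]
carries the negative cross term $-2ij(1+t)^{i-1}(1-t)^{j-1}$; overcoming it requires aggregating the coefficients $t_{ij}$ and exploiting structural bounds on them (such as log-concavity properties), which is tractable for highly structured classes like paving matroids but appears to be the genuine obstruction in full generality.
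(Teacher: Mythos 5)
You have not proved the statement, and it is worth being clear that the statement is Conjecture~\ref{old} of the paper: the authors do not prove it either, and it remains open. The paper establishes it only for restricted classes --- most notably for coloopless paving matroids, via exactly the reduction you propose (convexity of $T$ along the portion of $x+y=p$ in the positive quadrant gives $T(1,1)\le\tfrac12\left(T(2,0)+T(0,2)\right)\le\max\{T(2,0),T(0,2)\}$), and for wheels, whirls, complete graphs, $3$-regular graphs of girth at least $5$, and Catalan matroids by separate counting arguments in Section~\ref{sec:conjecture}.

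The genuine gap in your write-up is that the convexity of $f(t)=T_G(1+t,1-t)$ is stated as a goal but never established, and the deletion--contraction induction you sketch is abandoned exactly at the point you yourself identify: the mixed cases, where one of $G\setminus e$, $G/e$ is dominated by its acyclic count and the other by its totally cyclic count, and the degenerate bases, where deletion creates a coloop (forcing $T(0,2)=0$) or contraction creates a loop (forcing $T(2,0)=0$). Identifying an obstruction is not the same as overcoming it. The paper's Section~\ref{sec:paving} shows how far this program can currently be pushed: Lemma~\ref{convexity_contraction_deletion} is precisely your additivity step, but to close the induction the authors must restrict to coloopless paving matroids, where the problematic base cases can be completely classified (Lemma~\ref{lemma_2-stretching}) as uniform matroids, $2$-stretchings of uniform matroids $U_{s,s+1}$ or $U_{s,s+2}$, or $U_{1,2}\oplus U_{1,2}$, each of which has Tutte polynomial convex along the segment (Lemmas~\ref{parallel_convex}, \ref{uniform_convex}, \ref{lemma_2-stretching_convexity} and Theorem~\ref{rank_2_convexity}). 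Without an analogous structural theorem controlling the bases for general $2$-connected loopless graphs, your argument cannot be completed; indeed, completing it would settle an open conjecture, not reprove a result of this paper.
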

Any graph with at least one loop and at least one isthmus fails to
satisfy~\eqref{eq:MW}, so~\eqref{eq:MW} cannot hold for all
graphs. The main reason for the particular interest in the points
$(2,0)$, $(0,2)$ and $(1,1)$ is that in a connected graph $G$,
$T_G(2,0)$, $T_G(0,2)$ and $T_G(1,1)$ give the number of acyclic
orientations, totally cyclic orientations and spanning trees in
$G$. Definitions of acyclic and totally cyclic orientations are
contained in Section~\ref{sec:prelim}.


A related question is to determine whether any loopless 2-connected
graph $G$ satisfies the apparently stronger requirement
\[ T_G(2,0)T_G(0,2) \geq (T_G(1,1))^2.\]
Relatively little progress has been made to resolve these
questions. However, Jackson in~\cite{Jackson} has shown, with a
clever argument, that for any connected matroid $M$,
\[T_G(3,0)T_G(0,3) \geq (T_G(1,1))^2.\]

In this paper we make three contributions. First, in Section 4, we
show that the Tutte polynomial $T$ of a coloopless paving matroid
satisfies the inequality
\begin{equation}
tT(x_1,y_1)+(1-t)T(x_2,y_2) \geq T(tx_1+(1-t)x_2,ty_1+(1-t)y_2), \label{eq:convex}
\end{equation}
where $0\leq t \leq 1$ and $x_1,x_2,y_1,y_2$ are non-negative and
satisfy $x_1+y_1=x_2+y_2$. That is, $T$ is convex along the portions
of the line segments $x+y=p$ lying in the positive quadrant.
A paving matroid is one in which all circuits have size at least
$r(M)$. Interest in them stems from a conjecture in~\cite{MyNwWlWh}
which says that asymptotically almost every matroids is paving.
The special case of~\eqref{eq:convex}, obtained by setting
$x_1=y_2=2$, $x_2=y_1=0$ and $t=1/2$, establishes \eqref{eq:MW} for
the class of paving matroids. Therefore if the above conjecture is true
then we have established~\eqref{eq:MW} for, asymptotically, almost all
coloopless matroids.

Second, in Section 5, we prove that \eqref{eq:MW} holds for some
smaller classes of matroids and graphs that are not paving
matroids. Finally, in Section 3, we prove that if the ground set of
$M$ contains two disjoint bases then $T_M(0,2a) \geq T_M(a,a)$ and
dually if the ground set of $M$ is the union of two bases then
$T_M(2a,0)\geq T_M(a,a)$. These results cannot be obtained with the
methods used by Jackson in~\cite{Jackson}.

We conclude with a brief discussion of the natural question of for
which matroids is $T_M$ a convex function in the positive quadrant?


\section{Preliminaries}
\label{sec:prelim}

 We assume that the reader has some familiarity with matroid and graph
 theory. For matroid theory we follow Oxley's book \cite{Oxley} and
 for graph
 theory we follow Diestel's book \cite{Diestel}.


 The Tutte polynomial is a matroid invariant over the ring
 $\mathbb{Z}[x,y]$. Further details of many of the concepts
  treated here can be found in Welsh \cite{Welsh} and Oxley and
 Brylawski \cite{BrOx}.

 Some of the richness of the Tutte polynomial is due to its numerous
 equivalent definitions. One of the simplest
 definitions, which is often the easiest way to prove properties of
 the Tutte polynomial, uses the notion of rank.

 If $M=(E,r)$ is a  matroid, where $r$ is the rank-function of $M$,
 and $A\subseteq E$, we denote $r(E)-r(A)$ by $z(A)$  and $|A|-r(A)$ by
 $n(A)$.

\begin{definition}\label{definition}
 The Tutte polynomial of $M$, $T_{M}(x, y)$, is defined as follows:
 \begin{equation}\label{eq:expansion}
     T_{M}(x, y) = \sum_{A\subseteq E} (x-1)^{z(A)}(y-1)^{n(A)}\;.
 \end{equation}
\end{definition}

 Almost immediately  we see that $T_{M}(1,1)$ equals the number of
 bases of $M$ and $T_{M}(2,2)$ equals $2^{|E|}$.
 Recall that if $M=(E,r)$ is a matroid, then $M^{*}=(E,r^{*})$ is its
 dual matroid, where $r^{*}(A)=|A|-r(E)+r(E\setminus A)$. Because
 $z_{M^{*}}(A)=n_{M}(E\setminus A)$ and $n_{M^{*}}(A)=z_{M}(E\setminus
 A)$ it follows that $T_{M}(x,y)=T_{M^{*}}(y,x)$.

 For a graphic matroid $M(G)$, the evaluations of the Tutte
 polynomial at $(2,0)$ and $(0,2)$ equal
 the number of acyclic orientations and the number of totally cyclic
 orientations of $G$, respectively. An acyclic orientation of a graph
 $G$ is an orientation where there are no directed cycles. A totally
 cyclic orientation is an orientation where every edge is in a
 directed cycle. See~\cite{BrOx} for a proof of this result. In this
 situation we let $\alpha(G)$ and $\alpha^{*}(G)$ denote $T_{G}(2,0)$ and
 $T_{G}(0,2)$ respectively. If $G$ is connected, the
 number of spanning trees of $G$ is the evaluation of the Tutte
 polynomial at $(1,1)$ and this quantity is denoted by  $\tau(G)$.

 The Tutte polynomial may be also defined by a linear recursion relation
 given by deleting and contracting elements that are neither loops nor
 isthmuses.

\begin{definition}\label{def:recursive}
If $M$ is a matroid, and $e$ is an element  that is neither an isthmus
nor a loop, then
\begin{equation}\label{eq:deletion-contraction}
   T_{M}(x,y) = T_{M\setminus e}(x, y) + T_{M/e}(x, y).
\end{equation}  \label{end_recursion}
 If there is no such element $e$, then  $T_{M}(x, y) = x^{i}y^{j}$
  where $i$ and $j$ are the number of isthmuses and loops of $M$ respectively.
\end{definition}

 The proof that Definition~\ref{definition} and~\ref{def:recursive} are
 equivalent can be found in~\cite{BrOx}. We still require another
 (equivalent) definition of the Tutte polynomial but first we
 introduce the relevant notions.

 Let us fix an ordering $\prec$ on the elements of $M$, say
 $E=\{e_1,\ldots,e_m\}$, where $e_i\prec e_j$ if $i<j$. Given a fixed
 basis $S$, an element $e$ is called \emph{internally active} if $e\in S$
 and it is the smallest edge with respect to $\prec$ in the only
 cocircuit disjoint from $S\setminus
 \{e\}$. Dually, an element $f$ is \emph{externally active} if
 $f\not\in S$ and
 it is the smallest element in the only circuit contained in $S\cup
 \{f\}$.  We
 define $t_{ij}$ to be the number of bases with $i$
 internally activity elements and $j$ externally activity
 elements.  In \cite{Tutte} Tutte defined $T_{M}$ using these
 concepts. A proof of the equivalence with Definition~\ref{definition}
 can be found in~\cite{Bjorner}.

 \begin{definition} \label{def:activities}
 If $M=(E,r)$ is a matroid with a total order on its ground set, then
\begin{equation}\label{tutte_activities}
   T_{M}(x,y)=\sum_{i,j}t_{ij}x^iy^j\;.
\end{equation}
In particular, the coefficients $t_{ij}$ are independent of the total order
used on the ground set.
\end{definition}

 By an inductive argument using equation
 (\ref{eq:deletion-contraction}), it can
 be proved that $t_{10} = t_{01}$ when $E(M)\geq 2$. This is one of
 a  number of identities known to hold for the coefficients $t_{ij}$.
 For a complete characterization of all the
 affine linear relations that hold among the coefficients  $t_{ij}$
 see Theorem 6.2.13 in~\cite{BrOx}. From there we extract the
 relations that we need.

 \begin{thm}\label{relations}
  If  a rank-$r$ matroid $M$ with $m$ elements has neither loops nor
  isthmuses, then
     \begin{enumerate}
     \item $t_{ij}=0$, whenever $i>r$ or $j>m-r$;
     \item $t_{r0}=1$ and $t_{0, m-r}=1$;\label{max_rank_nullity}
     \item  $t_{rj}=0$ for all $j>0$ and $t_{i,m-r}=0$ for all $i>0$.
     \end{enumerate}
 \end{thm}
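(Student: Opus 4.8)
I would derive all three groups of relations directly from the rank--nullity expansion \eqref{eq:expansion}, reading off degrees and leading coefficients, rather than appealing to the full classification of linear relations. The activities description \eqref{tutte_activities} gives part (a) almost immediately: in any basis $S$ the internally active elements lie in $S$ and the externally active elements lie in $E\setminus S$, so the internal activity is at most $|S|=r$ and the external activity is at most $|E\setminus S|=m-r$. Hence $t_{ij}=0$ whenever $i>r$ or $j>m-r$. The same conclusion drops out of \eqref{eq:expansion}: since $z(A)=r-r(A)\le r$, and since the nullity is monotone so that $n(A)\le n(E)=m-r$, the $x$-degree and $y$-degree of $T_M$ are at most $r$ and $m-r$ respectively.

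For the remaining parts my plan is to extract the coefficient of $x^r$ from \eqref{eq:expansion}. Each summand $(x-1)^{z(A)}(y-1)^{n(A)}$ has $x$-degree exactly $z(A)$, so only subsets $A$ with $z(A)=r$, i.e. with $r(A)=0$, can contribute to the top power $x^r$. Because $M$ has no loops, the unique such subset is $A=\emptyset$, contributing $(x-1)^r(y-1)^0=(x-1)^r$. The coefficient of $x^r$ in $(x-1)^r$ is $1$, and this contribution carries no positive power of $y$. This simultaneously yields $t_{r0}=1$ and $t_{rj}=0$ for every $j>0$, which is the first half of (b) and of (c).

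The dual statements $t_{0,m-r}=1$ and $t_{i,m-r}=0$ for $i>0$ will then follow from $T_M(x,y)=T_{M^*}(y,x)$, which gives $t_{ij}(M)=t_{ji}(M^*)$. The matroid $M^*$ has rank $m-r$, and since $M$ has no coloops (respectively no loops), $M^*$ has no loops (respectively no coloops); applying the identities just proved to $M^*$ gives $t_{m-r,0}(M^*)=1$ and $t_{m-r,i}(M^*)=0$ for $i>0$, that is, $t_{0,m-r}(M)=1$ and $t_{i,m-r}(M)=0$ for $i>0$.

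The only point requiring genuine care is the coefficient extraction. One must verify that no subset with $z(A)<r$ can feed into the coefficient of $x^r$, which is immediate since such a subset has $x$-degree $z(A)<r$, and one must notice that the no-loop hypothesis is exactly what forces $A=\emptyset$ to be the only rank-zero subset: without it, subsets consisting of loops would simultaneously inflate the coefficient of $x^r$ and inject spurious powers of $y$, breaking both $t_{r0}=1$ and $t_{rj}=0$. The no-coloop hypothesis plays precisely the dual role on the $M^*$ side. I do not anticipate any deeper obstacle, as the argument is a direct degree analysis once these two hypotheses are used in the right places.
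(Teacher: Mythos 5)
Your proof is correct, but it takes a genuinely different route from the paper's. The paper gives essentially no argument of its own: it extracts these relations from the known classification of affine linear relations among the coefficients $t_{ij}$ (Theorem 6.2.13 of \cite{BrOx}), asserts that the result ``follows easily'' from the basis-activities expansion \eqref{tutte_activities}, and remarks only that the statement in \cite{BrOx}, given there for geometries, extends to matroids with parallel elements. You instead give a self-contained derivation from the corank--nullity expansion: part (a) by degree bounds (via activities, or via $z(A)\le r$ together with monotonicity of nullity in \eqref{eq:expansion}); the first halves of (b) and (c) by extracting the coefficient of $x^r$ from \eqref{eq:expansion}, where looplessness forces $A=\emptyset$ to be the unique rank-zero subset, so that coefficient is the constant polynomial $1$; and the second halves by the duality $t_{ij}(M)=t_{ji}(M^{*})$, using that $M^{*}$ is loopless exactly because $M$ is coloopless. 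Your approach buys independence from the activities machinery and from the external reference, and it makes transparent precisely where each hypothesis (no loops, no isthmuses) enters; the paper's approach buys brevity and situates the relations within the complete list of known identities. Your coefficient extraction is, in effect, the ``easy'' verification the paper gestures at but never writes down, so the two are complementary rather than in conflict.
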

  The previous result follows easily from
  Definition~\ref{def:activities}. In~\cite{BrOx} the statement is for
  simple matroids (geometries) but
  it is easy to extend it to matroids with parallel elements.


\section{Some inequalities for the Tutte polynomial}
\label{sec:ineq}
 From the  results in the previous section it is easy to prove the
 following result stated in~\cite{MrIbRd}.
\begin{thm}\label{basic_2_2}
 If a matroid $M$ has neither loops nor  isthmuses, then
 \begin{equation*}
\max\{T_{M}(4,0),T_{M}(0,4)\}\geq T_{M}(2,2).
\end{equation*}
\end{thm}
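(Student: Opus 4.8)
The plan is to argue entirely from the activities expansion in Definition~\ref{def:activities} together with the normalisations recorded in Theorem~\ref{relations}, and to exploit the fact, noted just after Definition~\ref{definition}, that $T_M(2,2)=2^{|E|}$. Write $m=|E|$ and $r=r(M)$. First I would substitute the two evaluations on the axes into $T_M(x,y)=\sum_{i,j}t_{ij}x^iy^j$. Setting $y=0$ annihilates every term with $j>0$, so $T_M(4,0)=\sum_i t_{i0}4^i$, and symmetrically $T_M(0,4)=\sum_j t_{0j}4^j$.

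The key observation is that we need retain only a single monomial from each of these sums. Since $M$ has neither loops nor isthmuses and every coefficient $t_{ij}$ is non-negative, Theorem~\ref{relations}\eqref{max_rank_nullity} supplies $t_{r0}=1$ and $t_{0,m-r}=1$. Discarding all the remaining (non-negative) terms therefore yields the lower bounds
\begin{equation*}
T_M(4,0)\ge t_{r0}\,4^{r}=4^{r},\qquad T_M(0,4)\ge t_{0,m-r}\,4^{m-r}=4^{m-r}.
\end{equation*}

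It then remains only to combine these with the value of $T_M(2,2)$. Recalling $T_M(2,2)=2^{m}=4^{m/2}$, I would pass to the maximum of the two bounds and invoke the elementary inequality $\max\{r,m-r\}\ge m/2$ to conclude
\begin{equation*}
\max\{T_M(4,0),T_M(0,4)\}\ge\max\{4^{r},4^{m-r}\}=4^{\max\{r,m-r\}}\ge 4^{m/2}=T_M(2,2),
\end{equation*}
which is exactly the claimed inequality. The entire content of the argument is the decision to keep only the top-degree term on each axis and to read off its coefficient from Theorem~\ref{relations}; once that reduction is made the rest is immediate, so I do not anticipate a genuine obstacle. The one detail worth checking is simply that the hypotheses of Theorem~\ref{relations}, namely the absence of loops and isthmuses, coincide with those assumed here, so that the normalisations $t_{r0}=t_{0,m-r}=1$ are legitimately available; the case of matroids with parallel or series elements is already accommodated by the stated extension of that theorem beyond simple matroids.
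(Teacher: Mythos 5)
Your proposal is correct and follows essentially the same route as the paper's own proof: both use the activities expansion with the normalisations $t_{r0}=t_{0,m-r}=1$ from Theorem~\ref{relations} to obtain $T_M(4,0)\geq 4^{r}$ and $T_M(0,4)\geq 4^{m-r}$, and then compare $\max\{4^{r},4^{m-r}\}$ with $2^{m}=T_M(2,2)$. The only difference is expository: you spell out the step $4^{\max\{r,m-r\}}\geq 4^{m/2}$ explicitly, whereas the paper compresses it into the chain $\max\{2^{2r},2^{2(m-r)}\}\geq 2^{m}$.
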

\begin{proof}
 Let $r$ be the rank  and $m$ the number of elements of $M$.
\begin{align*}
  \max\{T_{M}(4,0),T_{M}(0,4)\}&\geq \max\{4^{r},4^{m-r}\}\\
                               &=\max\{2^{2r},2^{2(m-r)}\}\\
                               &\geq 2^{m}=T_{M}(2,2)\;,
\end{align*}
 where the first inequality follows from equation~(\ref{tutte_activities})
 combined with~\ref{max_rank_nullity} in Theorem~\ref{relations}.
\end{proof}

Note that, for a matroid $M=(E,r)$ with dual
$M^{*}=(E,r^{*})$, the following inequalities are equivalent for any
$A\subseteq E$.
\begin{eqnarray}
   |A| &\leq& |E|-2(r(E)-r(A)),\label{2-bases} \\
  |E\setminus A| &\leq& 2r^{*}(E\setminus A)\ \text{and}\label{dual_2-bases}\\
   z(A)+n(A) &\leq&  |E|-r.
\end{eqnarray}


 We now restrict attention to matroids $M$ in which all
 subsets $A$ of the ground set $E$ satisfy the (equivalent)
 inequalities above.  By a classical result of
 J.~Edmonds~\cite{Edmonds2}, these
 are the matroids that contain two disjoint bases; by duality, these are
 the matroids $M$ whose ground set is the union of two bases of
 $M^*$.

  As every term $(x-1)^{z(A)}(y-1)^{n(A)}$ in $T_{M}$ has
  $x^{z(A)}y^{n(A)}$ as its monomial of maximum degree, the following
  theorem  follows directly from the set of inequalities above.

\begin{thm}\label{thm:max_degree}
   If a matroid $M$ contains two  disjoint bases,  then $t_{ij}=0$, for
  all $i$ and $j$ such that
  $i+j> m-r$. Dually, if its ground set is the union of two bases, then
  $t_{ij}=0$, for all $i$ and   $j$ such that $i+j>r$.
\end{thm}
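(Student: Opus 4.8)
The plan is to read the degree bound directly off the subset-expansion of the Tutte polynomial in Definition~\ref{definition}, exactly as the sentence preceding the theorem suggests. Starting from
\[ T_M(x,y) = \sum_{A \subseteq E} (x-1)^{z(A)}(y-1)^{n(A)}, \]
I would first analyse a single summand: multiplying out $(x-1)^{z(A)}(y-1)^{n(A)}$ produces only monomials $x^i y^j$ with $0 \le i \le z(A)$ and $0 \le j \le n(A)$, so every monomial occurring in that summand has total degree $i+j \le z(A)+n(A)$. This is the only computation needed, and it is immediate from the binomial theorem applied to each factor.

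Next I would invoke the hypothesis. Since $M$ contains two disjoint bases, the characterization recalled above (Edmonds' theorem) guarantees that every $A \subseteq E$ satisfies the last of the three equivalent inequalities displayed above, namely $z(A)+n(A) \le |E|-r = m-r$. Combining this with the per-summand degree bound shows that no monomial of total degree exceeding $m-r$ appears in any term of the sum, and hence none appears in $T_M$ itself.

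Finally I would match this against the coefficient form $T_M(x,y)=\sum_{i,j} t_{ij}x^iy^j$ coming from Definition~\ref{def:activities}. Because the two definitions agree as polynomials, their coefficient arrays coincide, so a monomial $x^iy^j$ absent from the subset-expansion must have $t_{ij}=0$; this yields $t_{ij}=0$ whenever $i+j>m-r$. The dual statement then follows with no extra work from $T_M(x,y)=T_{M^*}(y,x)$: if the ground set of $M$ is the union of two bases then $M^*$ contains two disjoint bases, the coefficients satisfy $t_{ij}^M=t_{ji}^{M^*}$, and $r(M^*)=m-r$, so the first part applied to $M^*$ gives $t_{ij}^M=0$ for $i+j>m-r(M^*)=r$.

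I do not expect a genuine obstacle: the argument is a one-directional degree count, so the usual worry about cancellation among the sign-alternating terms is irrelevant, since we are only proving that certain coefficients \emph{vanish}, which requires merely that no summand can contribute to those degrees. The single point deserving a line of care is the passage between the rank-expansion and the activities form of $T_M$, but this is already supplied by the equivalence of Definition~\ref{definition} and Definition~\ref{def:activities} recorded earlier.
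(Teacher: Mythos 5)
Your proposal is correct and is essentially the paper's own argument: the paper dispatches the theorem in one sentence by observing that each summand $(x-1)^{z(A)}(y-1)^{n(A)}$ has $x^{z(A)}y^{n(A)}$ as its monomial of maximum degree, so by Edmonds' characterization every monomial appearing has total degree at most $z(A)+n(A)\le m-r$, with the dual case handled via $T_M(x,y)=T_{M^*}(y,x)$ exactly as you do. The extra care you take---noting that vanishing of coefficients needs no cancellation argument, and explicitly passing from the rank expansion to the activities expansion of Definition~\ref{def:activities}---merely spells out what the paper leaves implicit.
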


 Now, it is easy to prove an infinite set of inequalities for the
 Tutte polynomial of a matroid that contains two disjoint bases or
 whose ground set is the union of two bases. This theorem was stated
 in~\cite{MrIbRd}.

\begin{thm}\label{main}
 If a matroid $M$ contains two disjoint bases, then
 \begin{equation}
   \label{eq:main}
   T_{M}(0,2a)\geq T_{M}(a,a)\;,
 \end{equation}
   for all $a\geq 2$. Dually, if its ground set is the union of two
   bases, then
\begin{equation}
   \label{eq:main2}
    T_{M}(2a,0)\geq T_{M}(a,a),
 \end{equation}
  for all $a\geq 2$.
 \end{thm}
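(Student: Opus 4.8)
The plan is to work entirely with the activities expansion \eqref{tutte_activities} and to sandwich $T_M(a,a)$ between two simple quantities, exploiting that every nonzero coefficient satisfies $i+j\le m-r$ by Theorem~\ref{thm:max_degree}. First I would record the two structural facts that drive everything: that $t_{0,m-r}=1$ by Theorem~\ref{relations}, and that the hypothesis of two disjoint bases forces $m\ge 2r$ (take $A=\emptyset$ in \eqref{2-bases}, giving $0\le m-2r$), equivalently $r\le m-r$. The whole argument then reduces to comparing the single leading monomial $(2a)^{m-r}$ of $T_M(0,2a)$ against a global upper bound for $T_M(a,a)$.

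The key step, and the only place the hypothesis $a\ge 2$ is used, is to bound $T_M(a,a)$ by the trivial evaluation $T_M(2,2)=2^{m}$. For each term, since $i+j\le m-r$ and $a\ge 2$, I would write $a^{i+j}=a^{m-r}/a^{(m-r)-(i+j)}\le a^{m-r}\,2^{-((m-r)-(i+j))}$, using that $a^{(m-r)-(i+j)}\ge 2^{(m-r)-(i+j)}$ for the nonnegative exponent $(m-r)-(i+j)$. Summing over all $i,j$ then collapses the estimate onto $T_M(2,2)$:
\[ T_M(a,a)=\sum_{i,j}t_{ij}a^{i+j}\le \frac{a^{m-r}}{2^{m-r}}\sum_{i,j}t_{ij}2^{i+j}=\frac{a^{m-r}}{2^{m-r}}\,T_M(2,2)=a^{m-r}2^{r}. \]

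Finally I would close the argument by discarding all but the top term of $T_M(0,2a)$: since $t_{0,m-r}=1$ and every coefficient is nonnegative, $T_M(0,2a)\ge (2a)^{m-r}=2^{m-r}a^{m-r}$. Because $r\le m-r$ gives $2^{r}\le 2^{m-r}$, we obtain $T_M(a,a)\le 2^{r}a^{m-r}\le 2^{m-r}a^{m-r}\le T_M(0,2a)$, which is \eqref{eq:main}. The dual inequality \eqref{eq:main2} needs no new work: if the ground set of $M$ is a union of two bases then $M^{*}$ contains two disjoint bases, so applying the case just proved to $M^{*}$ and invoking $T_M(x,y)=T_{M^{*}}(y,x)$ converts $T_{M^{*}}(0,2a)\ge T_{M^{*}}(a,a)$ into $T_M(2a,0)\ge T_M(a,a)$.

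The main obstacle here is conceptual rather than computational: one must resist attempting a term-by-term domination of $T_M(a,a)$ by $T_M(0,2a)$, which would yield the inequality for all $a\ge 0$ and in particular settle the Merino--Welsh conjecture for this class at $a=1$, and instead be content with the cruder global bound routed through $T_M(2,2)$. Recognizing $2^{m}=T_M(2,2)$ as the right intermediary, and that the degree bound $i+j\le m-r$ combined with $a\ge 2$ is exactly what turns it into the clean estimate $a^{m-r}2^{r}$, is the only real idea; the restriction $a\ge 2$ is forced precisely because the inequality $a^{(m-r)-(i+j)}\ge 2^{(m-r)-(i+j)}$ fails once $a<2$.
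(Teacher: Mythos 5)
Your proof is correct and is essentially the paper's own argument: both sandwich $T_M(a,a)\le (a/2)^{m-r}T_M(2,2)=2^{r}a^{m-r}\le (2a)^{m-r}\le T_M(0,2a)$, using the degree bound $i+j\le m-r$ from Theorem~\ref{thm:max_degree} together with $a\ge 2$, the coefficient fact $t_{0,m-r}=1$, the inequality $r\le m-r$, and duality for the second statement. The only difference is presentational (you bound $T_M(a,a)$ term by term, while the paper multiplies $4^{m-r}\ge T_M(2,2)$ through by $(a/2)^{m-r}$), so nothing further is needed.
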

 \begin{proof}
    Let us  consider just the case when $M$ has two disjoint bases: the
   other case follows from duality. In this situation $m-r\geq r$. From
   the proof of Theorem~\ref{basic_2_2} and
   equation~(\ref{tutte_activities}) we  have
   $4^{m-r}\geq T_{M}(2,2)=\sum_{i,j}t_{ij}2^{i+j}$. Multiplying this
   inequality by $(a/2)^{m-r}$ we get
\[
  (2a)^{m-r}\geq \sum_{i,j}t_{ij} \left( \frac{a}{2}\right)^{m-r} 2^{i+j}
            \geq \sum_{i,j}t_{ij} \left( \frac{a}{2}\right)^{i+j}2^{i+j}
               = \sum_{i,j}t_{ij} a^{i+j}.
\]
 The second inequality follows from Theorem~\ref{thm:max_degree}. Thus
\[T_{M}(0,2a)\geq (2a)^{m-r} \geq  \sum_{i,j}t_{ij}a^{i+j} = T_{M}(a,a). \]
\end{proof}

 We can sum up the previous result by saying that if $M$ contains two
 disjoint bases or  its ground set is the union of two bases then
  \begin{equation}\label{main_inequality}
    \max\{T_{M}(2a,0),T_{M}(0,2a)\}\geq T_{M}(a,a),
  \end{equation}
 for $a\geq 2$.
 Some classes of matroids which contain two disjoint bases or whose
 ground set is the union of two bases are mentioned in the following

 \begin{cor}\label{list_matroids}
 For a matroid $M$, we have that $T_{M}$ satisfies (\ref{main_inequality}),
 for all $a\geq 2$ whenever $M$ is  one of the following:
  \begin{itemize}
  \item an identically self-dual matroid $M$,
  \item a rank-$r$ projective geometry over GF($q$) or its
  dual, for $r\geq 2$.
   \end{itemize}
\end{cor}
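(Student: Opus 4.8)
The plan is to verify, for each of the two families listed, that $M$ either contains two disjoint bases or has $E$ equal to the union of two bases of $M$; the inequality (\ref{main_inequality}) then drops out of Theorem~\ref{main}. Throughout I would use the reformulation recorded just before Theorem~\ref{thm:max_degree}: $M$ contains two disjoint bases exactly when (\ref{2-bases}) holds for every $A\subseteq E$, that is, $2(r(E)-r(A))\leq |E\setminus A|$, and this is the condition to which I reduce in the generic situations. I would also keep at hand the complementation fact that if $B_1,B_2$ are disjoint bases of $M$ then $E\setminus B_1,E\setminus B_2$ are bases of $M^*$ with union $E$, which is what transfers the property to duals.

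For an identically self-dual matroid I would first note that $M=M^*$ forces $M$ to be loopless and coloopless (an element would otherwise be simultaneously a loop and a coloop), and that $r(E)=r^*(E)$ gives $|E|=2r(E)$. For an arbitrary $A\subseteq E$ I would then compute, applying the dual rank formula $r^*(X)=|X|-r(E)+r(E\setminus X)$ with $X=E\setminus A$ and using $r^*=r$,
\begin{equation*}
r(E\setminus A)=r^*(E\setminus A)=|E\setminus A|-r(E)+r(A)=r(E)-n(A).
\end{equation*}
Submodularity gives $r(E)\leq r(A)+r(E\setminus A)$, hence $r(E)-n(A)=r(E\setminus A)\geq r(E)-r(A)$, i.e. $n(A)\leq r(A)$. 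Since $|E|=2r(E)$, this is precisely (\ref{2-bases}), so $M$ contains two disjoint bases and (\ref{main_inequality}) follows from the first half of Theorem~\ref{main}.

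For projective geometries I would prove the stronger generic statement that $PG(r-1,q)$ with $r\geq 2$ contains two disjoint bases whenever $m:=|E|=\frac{q^r-1}{q-1}\geq 2r$. Because replacing $A$ by its closure leaves $z(A)$ unchanged while only shrinking $E\setminus A$, it suffices to test (\ref{2-bases}) on flats; a rank-$k$ flat $F$ has $|F|=\frac{q^k-1}{q-1}$, so the inequality reads $2(r-k)\leq\frac{q^r-q^k}{q-1}$. For $k\geq 1$ I would bound $\frac{q^r-q^k}{q-1}=q^k(1+q+\cdots+q^{r-k-1})\geq 2(r-k)$ using $q^k\geq 2$ and the $r-k$ summands, while $k=0$ is exactly the hypothesis $m\geq 2r$. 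An elementary check shows $m\geq 2r$ holds for every $PG(r-1,q)$ with $r\geq 2$ except $PG(1,2)=U_{2,3}$, whose ground set is the union of the two bases $\{1,2\}$ and $\{1,3\}$ and is thus covered by the second half of Theorem~\ref{main}. To handle the duals, I would invoke the complementation fact above: two disjoint bases of $PG(r-1,q)$ produce two bases of $(PG(r-1,q))^*$ whose union is $E$, so the dual satisfies the union-of-two-bases condition and hence (\ref{main_inequality}) as well.

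The two rank computations are routine; the step needing the most care is the projective-geometry case, where one must (i) justify reducing the base-packing inequality to flats, (ii) run the elementary but parameter-sensitive estimate $m\geq 2r$, and above all (iii) notice the lone exceptional geometry $U_{2,3}$, for which two disjoint bases do not exist and one is forced to use the dual ``union of two bases'' condition instead. The main thing to get right is the duality bookkeeping, ensuring that each geometry and its dual are routed through the correct half of Theorem~\ref{main}.
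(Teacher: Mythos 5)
Your proof is correct, but it takes a genuinely different route from the paper's, most notably in the projective geometry case. The paper argues structurally: for an identically self-dual matroid it simply observes that $B$ is a basis if and only if $E\setminus B$ is, so any basis and its complement form two disjoint bases; for projective geometries it exhibits explicit spanning submatroids with two disjoint bases (the wheel $W_{r+1}$ inside higher-rank geometries, $U_{2,4}\oplus_2 U_{2,4}$ inside projective planes over GF($q$) with $q\geq 3$, and a direct check for the Fano plane), and then passes to duals exactly as you do. Your approach instead verifies Edmonds' base-packing condition (\ref{2-bases}) analytically: for the self-dual case via submodularity (heavier than the paper's one-line complementation argument, though valid), and for projective geometries via the reduction to flats and the estimate $2(r-k)\leq q^k(1+q+\cdots+q^{r-k-1})$. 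What your method buys is uniformity and completeness: it treats all ranks at once without needing to know which submatroids embed in which geometries, and it correctly isolates the one exceptional geometry $U_{2,3}=\mathrm{PG}(1,2)$, which has no two disjoint bases and must be routed through the union-of-two-bases half of Theorem~\ref{main} --- a rank-$2$ case the paper's proof silently omits (its argument starts at projective planes, so the statement's ``$r\geq 2$'' is presumably projective dimension rather than rank). What the paper's method buys is brevity and transparency: once the embedding of a spanning submatroid with two disjoint bases is granted, no computation is needed. Both proofs handle the duality bookkeeping the same way, via the fact that complements of two disjoint bases of $M$ are two bases of $M^*$ covering the ground set.
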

  \begin{proof}
    A matroid $M=(E,r)$ is  identically self-dual  if $M=M^{*}$,
     so,  $B$ is a basis of $M$ if and only if $E-B$ is a basis of
     $M$.


    The matroid PG($r$, $q$) contains the graphic matroid $W_{r+1}$,
    the $r+1$-wheel, as a submatroid for
    $r\geq 3$, see~\cite{Oxley}. The latter  contains two disjoint bases. Thus,
    PG($r$, $q$) contains two disjoint bases. A projective plane of
    order $m\geq 4$ contains $U_{2,4}\oplus_2 U_{2,4}$ as a
    submatroid. Again, the latter  contains two
    disjoint bases. Thus, such a projective plane
    contains two disjoint bases. The only projective plane of order 3
    is the Fano matroid which clearly contains two disjoint bases.
  \end{proof}

There are more classes of matroids that can be added to the previous
list, for instance, coloopless paving matroids. However, in the next
section we will prove a much stronger result for them. The graphic
matroids corresponding to the families of graphs in our next result
may also be added to the list.

  \begin{cor}\label{list_graphs}
    For a graph $G$, $T_{G}$ satisfies
    (\ref{main_inequality}),
    for all $a\geq 2$ whenever $G$ is  one of the following:
    \begin{itemize}
    \item a  4-edge-connected graph,
    \item a  2-connected threshold graph,
    \item a complete bipartite  graph,
    \item a series-parallel graph,
    \item a  3-regular graph,
    \item a bipartite planar graph,
    \item a Laman graph,
    \item a triangulation,
    \item  the wheel graph $W_n$, for $n\geq 2$,
    \item  the square lattice $L_n$, for $n\geq 2$,
    \item  the $n$-cycle $n\geq 2$,
    \item  a tree with  $n$ edges, for $n\geq 1$.
    \end{itemize}
    \end{cor}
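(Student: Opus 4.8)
The plan is to reduce every item on the list to Theorem~\ref{main} by showing that, for each graph $G$, the cycle matroid $M(G)$ either contains two disjoint bases or has its ground set equal to the union of two bases; in graph-theoretic language, that $G$ has two edge-disjoint spanning forests, or that its edges can be covered by two spanning forests. Both conditions have clean combinatorial characterizations. By the Nash-Williams--Tutte theorem, $G$ has two edge-disjoint spanning trees exactly when every partition of $V(G)$ into $t$ classes has at least $2(t-1)$ crossing edges, while by Nash-Williams' arboricity theorem the edges of $G$ can be covered by two forests exactly when $|E(H)| \le 2(|V(H)|-1)$ for every subgraph $H$. Via the same matroid-union identity behind Edmonds' theorem, this second condition is precisely the statement that $E(G)$ is a union of two bases of $M(G)$, since for a cycle matroid $2r(A)\ge|A|$ for all $A$ amounts to the arboricity bound component by component. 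Thus the corollary becomes a checklist: verify one of these two density conditions for each family, and then invoke Theorem~\ref{main} to obtain \eqref{main_inequality}.

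First I would dispatch the ``sparse'' families in one stroke by establishing the subgraph bound $|E(H)| \le 2(|V(H)|-1)$. This holds for trees and cycles trivially, for series-parallel graphs and Laman graphs because every subgraph satisfies $|E(H)| \le 2|V(H)|-3$, for bipartite planar graphs (hence the grids $L_n$) because $|E(H)| \le 2|V(H)|-4$, and for simple 3-regular graphs because $2|E(H)|=\sum_{v}\deg_H(v)\le 3|V(H)|$, so $|E(H)|\le \tfrac{3}{2}|V(H)| \le 2(|V(H)|-1)$ once $|V(H)| \ge 4$, with $|V(H)| \le 3$ checked by hand. Each of these gives a union of two bases, and Theorem~\ref{main} yields $T_G(2a,0) \ge T_G(a,a)$. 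At the opposite extreme, a 4-edge-connected graph has two edge-disjoint spanning trees, since $2k$-edge-connectivity forces $k$ edge-disjoint spanning trees through the Nash-Williams--Tutte inequality; here Theorem~\ref{main} gives $T_G(0,2a) \ge T_G(a,a)$.

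Two of the remaining families yield to structural tricks. For a triangulation $G$ I would pass to the planar dual: every face is a triangle, so $G^{*}$ is 3-regular, hence a union of two bases by the computation above; since $M(G)^{*}=M(G^{*})$ and $T_G(x,y)=T_{G^{*}}(y,x)$, this transfers to $T_G(0,2a)=T_{G^{*}}(2a,0)\ge T_{G^{*}}(a,a)=T_G(a,a)$. For the wheel $W_n$ I would exhibit an explicit partition of its $2n$ edges into two spanning trees, for example the Hamiltonian path $h,v_1,\ldots,v_n$ formed from one spoke and the rim minus an edge, together with the complementary set of spokes plus the remaining rim edge; this gives two disjoint bases directly for every $n \ge 2$.

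The two genuinely awkward cases, which I expect to be the main obstacle, are the complete bipartite graphs and the 2-connected threshold graphs, because each requires a dichotomy rather than a single uniform bound. For $K_{m,n}$ with $m \le n$ I would split on $m$: when $m \ge 4$ the graph is 4-edge-connected and we are done; when $m \le 2$ the bound $|E(H)| \le 2(|V(H)|-1)$ holds, giving a union of two bases; and the case $m=3$ must be separated further, since $K_{3,n}$ is a union of two bases for $n\le 4$ but only acquires two edge-disjoint spanning trees, to be verified through the Nash-Williams--Tutte partition inequality, once $n\ge 5$ pushes its arboricity above two. A 2-connected threshold graph is delicate for the same reason: read off its creation sequence it can be as sparse as a triangle or as dense as $K_n$, so I would use the nested-neighbourhood structure of threshold graphs to argue that such a graph is either sparse enough to satisfy the arboricity bound or dense enough, via the partition inequality and the dominating clique forced by 2-connectivity, to contain two edge-disjoint spanning trees. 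Locating precisely where this crossover sits, and making the partition-inequality verification uniform across both $K_{3,n}$ and the dense threshold graphs, is the part I expect to demand the most care.
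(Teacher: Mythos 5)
Your overall strategy is exactly the paper's: reduce every family to Theorem~\ref{main} by verifying either the Tutte--Nash-Williams packing condition (two edge-disjoint spanning trees, hence two disjoint bases) or the Nash-Williams arboricity bound (edge set covered by two spanning trees, hence a union of two bases). Your handling of the sparse families via the subgraph count $|E(H)|\leq 2(|V(H)|-1)$, of 4-edge-connected graphs via packing, of triangulations via planar duality from the 3-regular case, and of wheels (where your explicit two-tree partition is more than the paper's ``it is easy to see'') all go through, as does your case split for $K_{m,n}$. One minor factual slip there: $K_{3,4}$ \emph{does} have two edge-disjoint spanning trees (it decomposes into two Hamiltonian paths, e.g.\ $b_1a_1b_2a_2b_3a_3b_4$ and $b_3a_1b_4a_2b_1a_3b_2$), so the crossover you place at $n\geq 5$ in fact occurs at $n\geq 4$; this is harmless because your union-of-two-bases claim for $K_{3,4}$ is also true and covers that graph.

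The genuine gap is the 2-connected threshold graphs: you describe the intended dichotomy but explicitly leave its verification open, so this item is not proved. It can be closed as follows. In a threshold creation sequence (each new vertex isolated or dominating), connectivity forces the last vertex to be dominating, and 2-connectivity on at least 3 vertices forces every non-dominating vertex to be followed by at least two dominating vertices; since the dominating vertices that follow a given vertex form a suffix of the dominating sequence, every vertex outside the last two dominating vertices $d,d'$ is adjacent to both $d$ and $d'$, and $d\sim d'$. Put $W=V\setminus\{d,d'\}$. If $W$ is independent, then $G$ is the join $K_2\vee\overline{K_{n-2}}$ and the two stars centred at $d$ and at $d'$ (each together with the edge $dd'$) are spanning trees covering all edges, so $E(G)$ is a union of two bases. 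Otherwise pick adjacent $w_0,w'\in W$; then $\{dw:w\in W\}\cup\{d'w_0\}$ and $\{d'w:w\in W\setminus\{w_0\}\}\cup\{dd',\,w_0w'\}$ are two edge-disjoint spanning trees. Note that some such argument is genuinely needed: the paper's own one-line claim that every 2-connected threshold graph has two edge-disjoint spanning trees is literally false for $K_3$, the diamond $K_2\vee\overline{K_2}$, and more generally $K_2\vee\overline{K_{n-2}}$ (too few edges), which is precisely why the covering half of your dichotomy cannot be dispensed with.
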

  \begin{proof}
   By the classical result in \cite{Tut61} every 4-edge-connected
  graph has two edge-disjoint
  spanning trees. It is easy to see that 2-connected threshold and
  wheel graphs have two edge-disjoint spanning trees. Using the
  expression for computing the arboricity of a graph given in
  \cite{NW64} we get that series-parallel,
  3-regular, bipartite planar, and Laman graphs all have arboricity
  two,  which is equivalent to having two spanning trees that cover all the
  edges of the graph. Triangulations are geometric duals of 3-regular
  planar graphs, so they have two edge-disjoint spanning trees.

   It is easy to see that each of $K_{2,m}$ for $m\geq 2$, $K_{3,3}$, the
   square lattice $L_n$ for $n\geq 2$, the $n$-cycle for $n\geq 2$, and a
   tree have two spanning trees which cover all the edges in the
   graph. With the exception of the case $n=m=3$, if both $n$ and $m$
   are at least 3, then $K_{n,m}$ always has two edge-disjoint
   spanning trees.
\end{proof}


\section{Paving matroids}\label{sec:paving}

 A paving matroid $M=(E,r)$ is a matroid whose circuits all have size
 at least $r$. Paving matroids are closed under minors and the set of
 excluded minors
 for the class consists of the matroid $U_{2,2}\oplus U_{0,1}$, see for
 example~\cite{IbMrNb}.  The interest about paving matroids goes back
 to 1976 when Dominic Welsh
 ask if  most matroids are paving, see~\cite{Oxley}. More recently,
 the authors
 in~\cite{MyNwWlWh} pose as a conjecture that asymptotically almost
 every matroid is paving.

 First, we prove that most paving matroids either contain two disjoint
 bases or  their ground set is the union of two bases. Consequently
 paving matroids fall within the class of matroids considered in the
 previous section.

 \begin{thm}\label{paving_DB_or_UI}
  Let $M=(E,r)$ be a rank-$r$ paving matroid with $n$ elements,
  \begin{itemize}
  \item if $2r>n$, then $E$ is the union of two bases,
  \item if $2r\leq n$ and $M$ is coloopless, then $M$ contains two
  disjoint bases.
  \end{itemize}
 \end{thm}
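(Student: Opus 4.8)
The plan is to translate each of the two conclusions into the rank inequalities already recalled in this section and then to verify those inequalities by exploiting the defining property of paving matroids. By Edmonds' theorem, as quoted above, $M$ contains two disjoint bases if and only if inequality~\eqref{2-bases} holds for every $A\subseteq E$, that is, $|A|-2r(A)\leq n-2r$. Dually, applying the same theorem to $M^{*}$ (whose two disjoint bases are precisely the complements of two bases of $M$ that cover $E$) shows that $E$ is the union of two bases of $M$ if and only if $|A|\leq 2r(A)$ for every $A\subseteq E$. So it suffices to prove $|A|\leq 2r(A)$ for all $A$ under the hypothesis $2r>n$, and $|A|-2r(A)\leq n-2r$ for all $A$ under the hypotheses $2r\leq n$ and $M$ coloopless.

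The one structural fact I would isolate first is the following consequence of the paving condition: in a rank-$r$ paving matroid every dependent set has rank at least $r-1$; equivalently, every $A$ with $r(A)\leq r-2$ is independent and hence satisfies $|A|=r(A)$. Indeed, a dependent set contains a circuit $C$ with $|C|\geq r$, and since $r(C)=|C|-1\geq r-1$ while $r(C)\leq r(A)$, we get $r(A)\geq r-1$. Thus the only subsets whose size can exceed their rank are those of rank $r-1$ (contained in a hyperplane) or rank $r$ (spanning).

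With this in hand both bullets reduce to a short case analysis on $k=r(A)$. For the first bullet, assume $2r>n$, i.e.\ $n\leq 2r-1$, and check $|A|\leq 2r(A)$: when $k\leq r-2$ we have $|A|=k\leq 2k$; when $k=r$ we have $|A|\leq n\leq 2r-1<2k$; and when $k=r-1$ the set $A$ cannot be all of $E$, so $|A|\leq n-1\leq 2r-2=2k$. For the second bullet, assume $2r\leq n$ and that $M$ is coloopless, and check $|A|-2r(A)\leq n-2r$: the cases $k\leq r-2$ and $k=r$ are immediate from $n\geq 2r$ and $|A|\leq n$ respectively. The only case that genuinely uses a hypothesis is $k=r-1$, where $A$ lies in the hyperplane $H=\mathrm{cl}(A)$; here I would invoke the coloopless hypothesis in the form that every cocircuit, being the complement of a hyperplane, has at least two elements, so $|H|\leq n-2$ and hence $|A|-2r(A)\leq (n-2)-2(r-1)=n-2r$.

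The main obstacle, and the place where care is needed, is precisely this hyperplane estimate: it is the single step that forces the coloopless hypothesis, and it is worth noting that it is exactly the step that fails for coloops, since a coloop produces a hyperplane of size $n-1$, i.e.\ a set $A$ with $|A|-2r(A)=n-2r+1$. One should also dispose of the degenerate rank cases ($r\leq 1$, or sets of rank $0$) separately, but for $r\geq 2$ a paving matroid is automatically loopless, so no subset of rank $0$ arises and the packing and covering characterizations apply without further hypotheses.
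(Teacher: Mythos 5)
Your proof is correct, and it takes a genuinely different route from the paper's. You reduce both bullets to Edmonds' rank criterion --- $|A|-2r(A)\leq n-2r$ for all $A\subseteq E$ for the packing statement, and its dualization $|A|\leq 2r(A)$ for the covering statement --- and then verify these inequalities using two structural facts: in a rank-$r$ paving matroid every set of rank at most $r-2$ is independent (so only sets of rank $r-1$ or $r$ need attention), and colooplessness forces every hyperplane to have size at most $n-2$, since cocircuits are exactly the complements of hyperplanes. The paper never invokes Edmonds' theorem inside its proof; it constructs the bases directly: for $2r>n$, the complement of a basis has size $n-r<r$, hence is independent and extends to a second basis covering $E$; for $2r\leq n$ it splits according to whether $M$ has a circuit $C$ of size $r+1$ (then $C\setminus\{a\}$ and $I\cup\{a\}$, with $I\subseteq E\setminus C$ of size $r-1$, are disjoint bases) or all circuits have size exactly $r$ (then, with $B$ a basis and $H=\mathrm{cl}(E\setminus B)$ a hyperplane, it builds one basis using all but one element $a$ of $E\setminus H$ and a second basis of the form $\{a\}\cup A_2$ with $A_2\subseteq H$, colooplessness guaranteeing $|E\setminus H|\geq 2$). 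Your approach is shorter and isolates cleanly where each hypothesis enters --- your observation that a coloop yields a hyperplane of size $n-1$, violating the inequality by exactly one, is the right sanity check --- but it uses the sufficiency direction of Edmonds' theorem as a black box; the paper's argument is self-contained and constructive, explicitly exhibiting the two bases. Since the paper already quotes Edmonds' characterization in Section~\ref{sec:ineq} to delimit the class of matroids under study, your reliance on it is legitimate in context.
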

 \begin{proof}
  In the first case, take $B_1$ to be a basis of $M$, then
  $I_2=E\setminus B_1$ has size $n-r<r$, so it is independent and we
  can extend it to a basis $B_2$. Thus $E=B_1\cup B_2$.

 In the second case, if $M$ has a circuit $C$ of size $r+1$, then
 $C'=E\setminus C$ has size $n-r-1\geq r-1$. Let  $I$ be a set
 of size $r-1$ contained in $C'$. As $I$ is independent and $C$ is
 spanning, there exists $a\in C\setminus I$ such that $I\cup\{a\}$ is
 a basis. But $C\setminus\{a\}$ is also a basis. Thus, we have two
 disjoint bases.

 Let $M$ be a coloopless paving matroid with no circuits of size $r+1$
 and suppose that $2r\leq n$. Let  $B$ be a basis of $M$. Then
 either $E\setminus B$ contains a basis, in which case we have
 finished the
 proof, or $r(E\setminus B)=r-1$. In the latter case, let $H$ be  the
 hyperplane
 defined as the closure of $E\setminus B$, and
 $I=E\setminus  H\subseteq B$. The set $I$ has size $p+1$ with $p\geq
 1$ as $M$ is  coloopless.

 We show that in this case $M$ also has two disjoint bases. Let
 $I'=I\setminus\{a\}$, for some $a\in I$. Then, $I'$ is a
 non-empty independent set of size $p$ with the property that for any
 circuit $C$ of size $r$ contained in $H$, $I'\cup C$ contains a basis
 of $M$. Thus, there is a basis $B_1$ of $M$ of the form $I'\cup A_1$
 for some subset $A_1$ of $H$ of size $r-p$.  Now, let
 $B_2=\{a\}\cup A_2$ for some $A_2\subseteq H\setminus A_1$ of size
 $r-1$. This is possible as $|H\setminus A_1|=(n-p-1)-(r-p)
 =(n-r)-1\geq r-1$. Thus, $B_1$ and $B_2$ are disjoint bases of $M$.
 \end{proof}

The main goal of this section is prove that for any coloopless paving matroid
\begin{equation}
tT(x_1,y_1)+(1-t)T(x_2,y_2) \geq T(tx_1+(1-t)x_2,ty_1+(1-t)y_2),
\end{equation}
whenever $0\leq t \leq 1$ and $x_1,x_2,y_1,y_2$ are non-negative and
satisfy $x_1+y_1=x_2+y_2$.  Notice
that this inequality is a much stronger statement than~(\ref{eq:main2})
as it says
that $T$ is a convex function along the portions of the line $x+y=p$
lying in the positive quadrant, rather than merely saying that the
value of $T$ at one of the endpoints of the line segment is greater
than the value of $T$ at its midpoint.

 Our main tools for establishing the convexity of $T$ are the
 following easy results.
 \begin{lemma}\label{convexity_duality}
  Let $M$ be a matroid. Either, both $T_M(x,y)$ and $T_{M^{*}}(x,y)$
  are convex along the portion of the line $x+y=p$ lying in the
  positive quadrant or neither is.
 \end{lemma}
  \begin{proof}
   This follows directly from the equality $T_{M}(x,y)=T_{M^{*}}(y,x)$.
  \end{proof}

 \begin{lemma}\label{convexity_contraction_deletion}
 Let $M$ be a matroid and $e$ in $M$ be neither a loop nor a coloop. If
 $T_{M\setminus e}$  and $T_{M/e}$ are both convex along the portion
 of the line $x+y=p$ lying in the positive quadrant, then $T_M$ is
 also convex on the same
 domain.
 \end{lemma}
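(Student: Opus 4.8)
The plan is to use the recursive definition of the Tutte polynomial directly. Since $e$ is neither a loop nor a coloop, Definition~\ref{def:recursive} gives the deletion-contraction identity $T_M(x,y) = T_{M\setminus e}(x,y) + T_{M/e}(x,y)$, valid as an identity of polynomials and hence at every point $(x,y)$. The key observation is that convexity along the line segment $x+y=p$ in the positive quadrant is a property that is preserved under addition of functions: a sum of two convex functions is convex.

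More concretely, I would fix $p$ and parametrize the relevant portion of the line $x+y=p$ in the positive quadrant by a single variable, writing each point as $(x_1,y_1)$ and $(x_2,y_2)$ with $x_1+y_1 = x_2+y_2 = p$, and let $0 \leq t \leq 1$. The hypothesis states that both $T_{M\setminus e}$ and $T_{M/e}$ satisfy the convexity inequality~\eqref{eq:convex} on this domain. Writing the inequality for each of the two polynomials and adding them termwise, the left-hand side becomes
\[
t\bigl(T_{M\setminus e}(x_1,y_1)+T_{M/e}(x_1,y_1)\bigr) + (1-t)\bigl(T_{M\setminus e}(x_2,y_2)+T_{M/e}(x_2,y_2)\bigr),
\]
which by the deletion-contraction identity equals $tT_M(x_1,y_1)+(1-t)T_M(x_2,y_2)$. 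Similarly the sum of the two right-hand sides is $T_M(tx_1+(1-t)x_2, ty_1+(1-t)y_2)$, again by deletion-contraction applied at the convex-combination point. Thus the desired inequality for $T_M$ follows immediately.

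The only point requiring a little care is that the convex-combination point $(tx_1+(1-t)x_2, ty_1+(1-t)y_2)$ must itself lie in the domain where the hypothesis applies, namely in the positive quadrant on the line $x+y=p$. This is automatic: the first coordinate is a convex combination of the non-negative numbers $x_1,x_2$ and hence non-negative, likewise for the second coordinate, and their sum is $t(x_1+y_1)+(1-t)(x_2+y_2)=p$. So the combined point lies on the same segment and the deletion-contraction identity may be applied there as well.

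I do not anticipate any genuine obstacle here; the lemma is essentially the statement that convexity is closed under nonnegative linear combinations, transported through the additive structure of the deletion-contraction recursion. The one subtlety worth flagging explicitly in the write-up is that we are invoking~\eqref{eq:deletion-contraction} as a pointwise numerical identity (legitimate since it holds as a polynomial identity), not merely as a recursion for computing coefficients; once that is noted, the result is a two-line consequence of adding the two hypothesized inequalities.
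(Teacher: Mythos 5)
Your proposal is correct and is exactly the paper's argument: the paper's proof likewise cites the deletion-contraction formula~(\ref{eq:deletion-contraction}) together with the fact that a sum of convex functions is convex. Your write-up simply makes explicit the details (adding the two inequalities and checking the convex-combination point stays on the segment) that the paper leaves implicit.
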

  \begin{proof}
    This follows directly from the deletion-contraction formula
    (\ref{eq:deletion-contraction}) and that the  sum of convex
    functions is  also a convex function.
  \end{proof}

  The following three results deal with the convexity of $T$ for
  some coloopless paving matroids. We use these
 cases as bases for an inductive argument later on.

 \begin{lemma}\label{parallel_convex}
 If M is isomorphic to  the paving  matroid $U_{1,k+1}\oplus U_{0,l}$,
 where $l\geq 0$  and $k\geq 1$, then  $T_M$ is
 convex along the portion of the line $x+y=p$ lying in the positive quadrant.
 \end{lemma}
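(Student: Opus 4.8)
The plan is to compute $T_M$ explicitly, restrict it to the line $x+y=p$, and verify that the resulting one-variable polynomial has non-negative coefficients, from which convexity is immediate. Since $U_{0,l}$ consists of $l$ loops we have $T_{U_{0,l}}(x,y)=y^l$, while a direct rank-expansion (or repeated deletion--contraction within the parallel class) gives $T_{U_{1,k+1}}(x,y)=x+y+y^2+\cdots+y^k$. Using multiplicativity of the Tutte polynomial over direct sums,
\[
T_M(x,y)=\bigl(x+y+y^2+\cdots+y^k\bigr)\,y^l = x\,y^l + y^{l+1}+y^{l+2}+\cdots+y^{l+k}.
\]

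Next I would parametrise the relevant segment. On $x+y=p$ in the positive quadrant we may assume $p\geq 0$ (otherwise the portion is empty and there is nothing to prove), and write $x=p-y$ with $y\in[0,p]$. Setting $f(y):=T_M(p-y,y)$ and expanding $(p-y)y^l=p\,y^l-y^{l+1}$, the crucial observation is that the negative term $-y^{l+1}$ cancels exactly against the $+y^{l+1}$ arising from the rank-one factor, leaving
\[
f(y)=p\,y^l+y^{l+2}+y^{l+3}+\cdots+y^{l+k},
\]
with the convention that the trailing sum is empty when $k=1$. Because $p\geq 0$, every coefficient of $f$ is non-negative.

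Finally I would conclude by a standard second-derivative argument: for any polynomial $\sum_m c_m y^m$ with all $c_m\geq 0$ one has $f''(y)=\sum_m c_m\,m(m-1)\,y^{m-2}\geq 0$ for every $y\geq 0$, since each factor $m(m-1)$ and each power $y^{m-2}$ is non-negative there. Hence $f$ is convex on $[0,p]$, which is exactly the asserted convexity of $T_M$ along the positive-quadrant portion of $x+y=p$. The only delicate point — and the heart of the matter — is the cancellation of the $y^{l+1}$ term: without it the coefficient of $y^{l+1}$ would be $-1$ and convexity could genuinely fail, so this step is where the specific structure of $U_{1,k+1}\oplus U_{0,l}$ is really used; everything else is routine. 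It is worth noting that the computation handles all $l\geq 0$ and $k\geq 1$ uniformly, so this family is well suited to serve as a base case for the later induction built on Lemma~\ref{convexity_contraction_deletion} and Lemma~\ref{convexity_duality}.
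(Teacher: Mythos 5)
Your proposal is correct and matches the paper's proof essentially step for step: both compute $T_M(x,y)=y^l(x+y+\cdots+y^k)$, substitute $x=p-y$ so that the $y^{l+1}$ terms cancel and leave $py^l+\sum_{m=l+2}^{l+k}y^m$, and then conclude convexity because a non-negative combination of the convex functions $y^m$ (equivalently, a polynomial with non-negative coefficients on $y\geq 0$) is convex. The only cosmetic difference is that you justify the final step via the second derivative while the paper simply cites closure of convexity under sums.
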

 \begin{proof}
  We have
    \[ T_M(x,y) = y^l(y^k+\cdots +y+x) = py^l  +\sum_{m=l+2}^{l+k} y^m.\]
  Since $y^m$ is convex for all $m\geq 0$ in the given region and the sum
  of convex
  functions is convex, the result follows.
 \end{proof}

 \begin{lemma}\label{uniform_convex}
  The Tutte polynomial $T_M$ is a convex function in the positive
  quadrant when $M$ is a
  uniform matroid. In particular, $T_M$ is convex along the portion of
  the line $x+y=p$ lying in the positive quadrant
 \end{lemma}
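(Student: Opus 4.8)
The plan is to prove the stronger, full two-dimensional statement by induction on the number of elements $n$, exploiting the fact that every minor of a uniform matroid is again uniform. Concretely, if $M=U_{r,n}$ with $0<r<n$, then every element $e$ is neither a loop nor a coloop, and $M\setminus e=U_{r,n-1}$ while $M/e=U_{r-1,n-1}$; hence the deletion--contraction relation~\eqref{eq:deletion-contraction} expresses $T_{U_{r,n}}$ as a sum of the Tutte polynomials of two smaller uniform matroids. Since the positive quadrant is convex and the sum of two functions that are convex on a convex region is again convex there --- exactly the reasoning behind Lemma~\ref{convexity_contraction_deletion}, now applied to the whole quadrant rather than to a single line --- it suffices to verify convexity for the base cases of the induction.

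For the base cases I would take the uniform matroids having no element that can be deleted or contracted, namely $r=0$ and $r=n$. Here $U_{0,n}$ consists of $n$ loops with $T_{U_{0,n}}(x,y)=y^n$, and $U_{n,n}$ consists of $n$ coloops with $T_{U_{n,n}}(x,y)=x^n$; each is a single-variable power with non-negative exponent, so its Hessian in the positive quadrant is the diagonal matrix $\operatorname{diag}(0,n(n-1)y^{n-2})$, respectively $\operatorname{diag}(n(n-1)x^{n-2},0)$, which is positive semidefinite for $x,y\ge 0$. (The degenerate cases $n\le 1$ are constant or linear, hence trivially convex, and anchor the induction on $n$.) The inductive step is then immediate: for $0<r<n$ both $U_{r,n-1}$ and $U_{r-1,n-1}$ have fewer elements, so by the inductive hypothesis their Tutte polynomials are convex on the positive quadrant, and therefore so is their sum $T_{U_{r,n}}$. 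Convexity along the segments $x+y=p$ follows as a special case.

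The step I expect to require the most care is resisting the temptation to argue directly from the explicit expansion, as was done in Lemma~\ref{parallel_convex}. Writing $M=U_{r,n}$ out through the rank expansion~\eqref{eq:expansion} does give the clean closed form $T_{U_{r,n}}(x,y)=f(x)+g(y)+\binom{n}{r}$, where $f$ and $g$ are one-variable polynomials with non-negative coefficients in $(x-1)$ and $(y-1)$ respectively; this already reduces full convexity to the one-variable convexity of $f$ and $g$, which by the duality $U_{r,n}^{*}=U_{n-r,n}$ (together with Lemma~\ref{convexity_duality}) are of the same type. However, the individual terms $(x-1)^{j}$ fail to be convex on $x\ge 0$ when $j\ge 3$ is odd, so one cannot simply invoke ``a sum of convex monomials is convex'' as in Lemma~\ref{parallel_convex}; establishing $f''\ge 0$ on $[0,\infty)$ directly amounts to controlling a signed sum of binomial coefficients and is the genuinely delicate point. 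The deletion--contraction induction sketched above circumvents this analytic difficulty entirely, which is why I would organise the proof around it.
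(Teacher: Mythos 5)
Your proof is correct, but it takes a genuinely different route from the paper's. You induct on the number of elements via deletion--contraction: for $0<r<n$ every element of $U_{r,n}$ is neither a loop nor a coloop, $U_{r,n}\setminus e=U_{r,n-1}$ and $U_{r,n}/e=U_{r-1,n-1}$, so \eqref{eq:deletion-contraction}, together with the fact that a sum of functions convex on the (convex) positive quadrant is again convex there, reduces everything to the base cases $T_{U_{n,n}}=x^{n}$ and $T_{U_{0,n}}=y^{n}$; this is sound, and the induction is well-founded since both minors are again uniform with one fewer element. The paper instead argues from an explicit formula: starting from the rank expansion \eqref{eq:expansion} it re-expands the polynomial in the monomial basis (equivalently, obtains it directly from the activity expansion \eqref{tutte_activities}) as
\[
T_{U_{r,n}}(x,y)=\sum_{j=1}^{n-r}\binom{n-j-1}{r-1}y^{j}+\sum_{i=1}^{r}\binom{n-i-1}{n-r-1}x^{i}\qquad(0<r<n),
\]
a nonnegative combination of pure powers $x^{i}$ and $y^{j}$ with no mixed monomials, each of which is convex on the quadrant, so the sum is convex. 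This also disposes of the worry in your last paragraph: the paper never argues termwise on the $(x-1)^{j}$ terms of the rank expansion (which, as you note, fail to be convex on $[0,1)$ for odd $j\geq 3$), but on the re-expanded form, whose coefficients are nonnegative. The trade-off is this: your induction needs no closed formula at all --- only minor-closure of uniform matroids and the observation that loops and coloops occur only at the extreme ranks --- and it makes explicit the two-variable analogue of Lemma~\ref{convexity_contraction_deletion}; the paper's computation is shorter once the expansion is granted, and it records the stronger structural fact that $t_{ij}=0$ whenever $i,j\geq 1$ for uniform matroids, which is precisely what makes the ``sum of convex terms'' argument work (a mixed term such as $xy$ is not convex on the quadrant).
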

 \begin{proof}
   The Tutte polynomial of a uniform matroid can be computed easily
   using (\ref{eq:expansion}).
   \begin{equation*}
          T_{U_{r,n}} (x,y)= \sum_{i=0}^{r-1}\binom ni(x-1)^{r-i}+
                           \binom nr+
                           \sum_{i=r+1}^{n}\binom ni(y-1)^{i-r}.
    \end{equation*}
  This can be expanded into the following expression, which may also
  be established directly using~(\ref{tutte_activities}).
   \begin{equation*}
          T_{U_{r,n}} (x,y)= \sum_{j=1}^{n-r}\binom{n-j-1}{r-1}y^{j}+
                           \sum_{i=1}^{r}\binom{n-i-1}{n-r-1}x^{i},
    \end{equation*}
  when $0<r<n$, while $T_{U_{n,n}}(x,y)=x^{n}$ and $T_{U_{0,n}}(x,y)=y^{n}$.

 As each term is a convex function we get the result.
 \end{proof}

 \begin{thm}\label{rank_2_convexity}
  If $M$ is a rank-2 loopless and coloopless matroid, then $T_M$ is
  convex along the portion of the line $x+y=p$ lying in the positive quadrant.
 \end{thm}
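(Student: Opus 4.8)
The plan is to combine a structural description of rank-2 loopless coloopless matroids with the deletion--contraction reduction of Lemma~\ref{convexity_contraction_deletion}, falling back on explicit computation in the unavoidable base cases. A rank-2 loopless matroid $M$ is determined by the sizes $a_1,\dots,a_k$ of its parallel classes, where $k\geq 2$; the coloopless hypothesis rules out exactly those configurations in which deleting a single element drops the rank to $1$, so it forces either $k\geq 3$, or $k=2$ with $a_1,a_2\geq 2$. I would split the argument according to these cases and induct on the number of elements.

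First I would dispose of the simple case, where every $a_i=1$: then $M\cong U_{2,n}$ and convexity is immediate from Lemma~\ref{uniform_convex}. Next, suppose $k\geq 3$ and some class $P_i$ has $a_i\geq 2$, and pick $e\in P_i$. Since $M$ is loopless and coloopless, $e$ is neither a loop nor a coloop, so Lemma~\ref{convexity_contraction_deletion} applies once both minors are shown convex. Contracting $e$ turns the remaining $a_i-1$ elements of $P_i$ into loops and makes all other elements mutually parallel, so $M/e\cong U_{1,\,n-a_i}\oplus U_{0,\,a_i-1}$ with $n-a_i\geq 2$, which is convex by Lemma~\ref{parallel_convex}. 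Deleting $e$ leaves a rank-2 loopless matroid whose parallel classes still number $k\geq 3$ (the class $P_i$ merely shrinks to size $a_i-1\geq 1$), so it is again loopless and coloopless with fewer elements and is convex by the induction hypothesis. Lemma~\ref{convexity_contraction_deletion} then yields convexity of $T_M$.

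The \textbf{main obstacle} is the remaining case $k=2$, where $M\cong U_{1,a}\oplus U_{1,b}$ with $a,b\geq 2$. Here the naive reduction breaks down: deleting an element of a size-$2$ class creates a coloop, and the resulting minor $x\cdot T_{U_{1,b}}$ is genuinely \emph{not} convex along $x+y=p$ for $b\geq 3$, so I cannot reduce to the earlier cases. Instead I would treat this case directly. Using multiplicativity of $T$ over direct sums, $T_M=T_{U_{1,a}}\,T_{U_{1,b}}$ with $T_{U_{1,m}}(x,y)=x+y+\cdots+y^{m-1}$. Restricting to the line by substituting $x=p-y$ collapses each factor to $p+y^2+y^3+\cdots+y^{m-1}$, a polynomial in $y$ with nonnegative coefficients and no linear term. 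Each such factor is therefore nonnegative, nondecreasing, and convex for $y\geq 0$; since $(fg)''=f''g+2f'g'+fg''\geq 0$ when $f,g$ are nonnegative, nondecreasing, and convex, the product $T_M$ restricted to the segment is convex in $y$, hence in the parameter $s=p-y$, which is what is required.

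In summary, every rank-2 loopless coloopless matroid other than the two-parallel-class case is reached from the uniform base $U_{2,n}$ by deletion--contraction steps that stay inside the coloopless class, and the only genuinely delicate point is $U_{1,a}\oplus U_{1,b}$, for which the substitution $x=p-y$ exposes the convexity by removing the linear term from each factor.
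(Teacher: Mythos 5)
Your proof is correct, and its skeleton---induction on the number of elements via Lemma~\ref{convexity_contraction_deletion}, with Lemma~\ref{uniform_convex} for the uniform case and Lemma~\ref{parallel_convex} for the contraction $M/e\cong U_{1,\,n-a_i}\oplus U_{0,\,a_i-1}$---is the same as the paper's. The genuine difference is the treatment of the two-parallel-class case. The paper observes that the deletion--contraction reduction survives even when $k=2$, provided some class has size at least $3$: deleting an element of that larger class leaves both classes of size at least $2$, hence a coloopless minor to which the induction applies, so the only leftover matroid is $U_{1,2}\oplus U_{1,2}$, whose Tutte polynomial $(x+y)^2$ is constant along $x+y=p$. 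You instead stop the induction at $k=2$ altogether and dispose of every $U_{1,a}\oplus U_{1,b}$ with $a,b\geq 2$ by direct computation: multiplicativity over direct sums gives $T_M=T_{U_{1,a}}T_{U_{1,b}}$, the substitution $x=p-y$ cancels the linear terms in each factor, and the product of two nonnegative, nondecreasing, convex functions of $y$ is convex because $(fg)''=f''g+2f'g'+fg''\geq 0$. Both routes are valid; the only inaccuracy is your assertion that the reduction ``breaks down'' for all of $k=2$---it fails only if you delete from a class of size $2$, and for $U_{1,a}\oplus U_{1,b}$ with $a\geq 3$ the paper's choice of $e$ in the larger class works fine---but this overstatement costs you nothing, since your direct argument covers the whole case. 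Your computation buys a self-contained base case (subsuming the paper's $(x+y)^2$ observation) at the price of an extra product-convexity argument, while the paper's sharper choice of $e$ keeps the exceptional case to a single matroid.
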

 \begin{proof}
   If $M$ is isomorphic to the uniform matroid $U_{2,n}$, the result
   follows from applying the previous lemma. Otherwise, $M$ is
   isomorphic to a
   matroid with parallel elements whose simplification is isomorphic
   to $U_{2,n}$.

   If $n\geq 3$ or there is a parallel class of size at
   least 3, we can choose an element $e$ in a non-trivial parallel
   class of $M$ such that $M\setminus e$ does not have a coloop. In
   this case $M/e$ is isomorphic to $U_{1,k+1}\oplus U_{0,l}$,
   where $l\geq 1$  and $k\geq 1$ and $M\setminus e$ is a rank-2
   loopless and coloopless matroid. The result follows from
   Lemma~\ref{parallel_convex}, induction and
   Lemma~\ref{convexity_contraction_deletion}.

   In the last case, the simplification of $M$ is isomorphic to
   $U_{2,2}$ and every element is in a parallel class of size 2. Then
   $M$ is isomorphic to $U_{1,2}\oplus U_{1,2}$. Then, $T_M=(x+y)^2$
   which is convex (in fact is constant) along $x+y=p$ for $p>0$ and
   $0\leq y\leq p$.
 \end{proof}

   In order to establish or main result, we need the following
   structural result about coloopless paving matroids.

   \begin{lemma}\label{lemma_2-stretching}
    Let $M$ be a rank-r coloopless paving matroid. If for every element $e$
    of $M$, $M\setminus e$ has a coloop, then one of the following
    three cases happens.
    \begin{enumerate}
    \item $M$ is isomorphic to $U_{r,r+1}$.\label{2-stretching_i}
    \item $M$ is the 2-stretching of a uniform matroid $N$ and
       $N$ is isomorphic to  $U_{s,s+1}$ or
       $U_{s,s+2}$, for some $s\geq 1$.\label{2-stretching_ii}
    \item $M$ is isomorphic to $U_{1,2}\oplus U_{1,2}$.\label{2-stretching_iii}
    \end{enumerate}
   \end{lemma}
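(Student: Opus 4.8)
The plan is to translate the hypothesis into a statement about cocircuits, bound the corank of $M$, and then read off the structure. First I would observe that, for a coloopless matroid, the statement ``$M\setminus e$ has a coloop'' is equivalent to ``$e$ lies in a cocircuit of size $2$''. Indeed, an element $f$ is a coloop of $M\setminus e$ precisely when $r(E\setminus\{e,f\})=r-1$, i.e. when $E\setminus\{e,f\}$ is a hyperplane and $\{e,f\}$ is a cocircuit; the only competing possibility, that $E\setminus\{e\}$ is itself a hyperplane, is excluded because it would make $e$ a coloop of $M$. Hence the hypothesis says exactly that every element of $M$ lies in a $2$-element cocircuit. Two auxiliary facts will be used: by cocircuit elimination together with the absence of coloops, the relation ``$e=f$ or $\{e,f\}$ is a cocircuit'' is an equivalence relation, so $E$ splits into \emph{series classes} each of size at least $2$; and since a circuit and a cocircuit never meet in a single element, every circuit of $M$ is a union of these classes.

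The heart of the argument is to bound the corank $k:=|E|-r$, and this is where the paving hypothesis is essential. For $r\le 1$ one checks directly that the only matroid meeting the hypotheses is $U_{1,2}$, so assume $r\ge 2$; then $|E|=r+k\ge 3$, and because each $2$-cocircuit covers only two elements there must be at least two distinct $2$-cocircuits $\{e,f\}\ne\{e',f'\}$. Their complements $H=E\setminus\{e,f\}$ and $H'=E\setminus\{e',f'\}$ are distinct hyperplanes, so $\operatorname{cl}(H\cap H')$ has rank at most $r-2$; since $M$ is paving, every set of size $r-1$ is independent, forcing $|H\cap H'|\le r-2$. On the other hand $|H\cap H'|=|E|-|\{e,f\}\cup\{e',f'\}|\ge (r+k)-4$. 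Comparing the two estimates yields $k\le 2$, so $k\in\{1,2\}$.

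It then remains to classify $M$ by the value of $k$. If $k=1$ then $M$ is a coloopless matroid of corank $1$; its cycle space is one-dimensional, so it has a unique circuit, which must be all of $E$, giving $M\cong U_{r,r+1}$ (case (a); when $r$ is odd this is also the $2$-stretching of $U_{s,s+1}$). If $k=2$ the inequality above is tight, so any two distinct $2$-cocircuits are \emph{disjoint}; hence the $2$-cocircuits form a perfect matching $P_1,\dots,P_t$ of $E$, with $|E|=2t$ and $r=2t-2$. Each $E\setminus P_i$ is then a hyperplane of size $r$, and as its proper subsets all have size $r-1$ they are independent, so $E\setminus P_i$ is a circuit; conversely any circuit is a union of the $P_i$ of total size at least $r$, hence of at least $t-1$ pairs, and therefore equals some $E\setminus P_i$. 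Thus the circuits of $M$ are exactly $\{E\setminus P_i\}$, which are precisely the circuits of the $2$-stretching of $U_{t-2,t}=U_{s,s+2}$ (case (b)); when $t=2$ this degenerates to the direct sum of two digons $U_{1,2}\oplus U_{1,2}$ (case (c)).

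The step I expect to be the main obstacle is the corank bound: one must apply the elementary intersection estimate for hyperplanes of a paving matroid to \emph{genuinely distinct} hyperplanes, which requires the preliminary guarantee that two distinct $2$-cocircuits exist, and one must separately dispose of the degeneracies $r\le 1$ and $t=2$. A secondary point is purely organisational: the three listed cases overlap, since the $2$-stretchings of $U_{s,s+1}$ are already instances of $U_{r,r+1}$. Consequently the classification need only exhibit, for each value of $k$, \emph{some} matroid on the list rather than a canonical one, which keeps the final reconstruction of $M$ from its circuit set short.
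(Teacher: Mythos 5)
Your proof is correct, but it takes a genuinely different route from the paper's. Both arguments begin identically, by observing that for a coloopless matroid the hypothesis says every element lies in a $2$-cocircuit (a series pair). From there the paper works structurally: it splits into the case of a parallel pair (forcing rank at most $2$, hence $U_{1,2}$ or $U_{1,2}\oplus U_{1,2}$), the case of a series class of size at least $3$ (where some $M\setminus e$ acquires two coloops, and a paving matroid with two coloops can have no circuits, forcing $M\cong U_{r,r+1}$), and the case where every series class has size exactly $2$, so that $M$ is the $2$-stretching of a matroid $N$; it then uses minor-closure of the paving class and the inequality ``circuit size $\geq$ rank'' in $M$ to force $N$ to be uniform and equal to $U_{s,s+1}$ or $U_{s,s+2}$. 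You instead prove the quantitative intermediate statement that the corank $k=|E|-r$ is at most $2$, by playing off two estimates for the intersection of the hyperplanes complementary to two distinct $2$-cocircuits: at most $r-2$ elements (distinct flats of rank $\leq r-2$ in a paving matroid) versus at least $|E|-4$ elements. You then classify by corank, reconstructing $M$ from its circuit set: corank $1$ gives a unique circuit which must be $E$, hence $U_{r,r+1}$; corank $2$ forces the $2$-cocircuits to form a perfect matching whose pair-complements are exactly the circuits, hence the $2$-stretching of $U_{t-2,t}$, degenerating to $U_{1,2}\oplus U_{1,2}$ when $t=2$. Your route buys the sharper invariant $k\leq 2$ and makes the overlap among the three listed cases explicit (which, as you note, is why landing in a sublist of the disjunction suffices); the paper's route produces the $2$-stretching description directly and needs only soft facts rather than hyperplane counting. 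Two cosmetic points, neither affecting correctness: the appeal to a one-dimensional ``cycle space'' in the corank-$1$ case should be replaced by the standard matroid fact that two distinct circuits force nullity at least $2$ (by submodularity applied to their union), and, exactly like the paper, you silently assume $E\neq\emptyset$.
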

   \begin{proof}
    If $e$ is such that $M\setminus e$ has a coloop $f$, then
    $\{e,f\}$ are in either a  series or form a parallel class. If there is a
    parallel class in a  paving matroid, its rank is either 1 or
    2. Thus, if $\{e,f\}$ are in a parallel class, $M$ is isomorphic
    to  $U_{1,2}\oplus U_{1,2}$ or $U_{1,2}$.

    We can assume that $M$ contains no non-trivial parallel
    classes. Hence every element belongs to a series class of size at
    least two. Suppose that there is a series class containing at
    least three elements $e,f,g$. In
    this case, $M\setminus e$ will have at least 2 coloops. But as $M$ is
    paving all its minors are also paving. Thus, $M\setminus e$, being
    a paving matroid with at least 2 coloops, cannot
    have circuits  and $M\setminus e$ is isomorphic to $U_{r,r}$. In
    this case, we conclude that $M$ is isomorphic to  $U_{r,r+1}$.

    To finish, we suppose that every element in $M$  is in a series
    class of size 2.  In this case, $M$ is the 2-stretching of a
    rank-$s$ matroid $N$ with $m$ elements and $s\geq 1$. $N$ is
    paving because it is a minor of $M$ and it must have circuits as $M$
    is coloopless.

    If the minimal size of a circuit in  $N$ has size $s$, $M$ has a
    circuit of size $2s$.  But the rank of $M$ is $s+m$ as it is the
    2-stretching of $N$. Then $2s\geq s+m$ and $s=m$. In this case,
    $N$ would be isomorphic to $U_{s,s}$ and we arrive at a
    contradiction. Thus, $N$ does not have circuits of size $s$.

    Hence all the circuits of $N$ have size $s+1$ and
    $N$ is uniform. Then, there is a circuit  in $M$ of size $2s+2\geq
    s+m$, and $s+2\geq m\geq s+1$. Thus, $N$ is isomorphic to
    $U_{s,s+1}$ or $U_{s,s+2}$.
   \end{proof}

    \begin{lemma}\label{lemma_2-stretching_convexity}
    Let $M$ be a rank-r coloopless paving matroid. If for every element $e$
    of $M$, $M\setminus e$ has a coloop, then $T_M$ is
    convex along the portion of the line $x+y=p$ lying in the positive quadrant.
   \end{lemma}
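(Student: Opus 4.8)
The plan is to invoke the structural dichotomy of Lemma~\ref{lemma_2-stretching}, which guarantees that $M$ is isomorphic to $U_{r,r+1}$, is the $2$-stretching of $U_{s,s+1}$ or $U_{s,s+2}$ for some $s\ge 1$, or is isomorphic to $U_{1,2}\oplus U_{1,2}$. I would then verify convexity separately in each of these cases, reducing every one of them to a matroid whose convexity has already been established, namely a uniform matroid (Lemma~\ref{uniform_convex}) or a rank-$2$ loopless coloopless matroid (Theorem~\ref{rank_2_convexity}).

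Two of the cases are immediate. If $M\cong U_{r,r+1}$ then $M$ is uniform and Lemma~\ref{uniform_convex} applies directly. If $M\cong U_{1,2}\oplus U_{1,2}$ then, as computed in the proof of Theorem~\ref{rank_2_convexity}, $T_M(x,y)=(x+y)^2$, which equals the constant $p^2$ on the segment $x+y=p$ and is therefore trivially convex there.

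The crux is the analysis of the two stretching cases, where I would exploit the fact that $2$-stretching (replacing every element by a series pair) is dual to $2$-thickening (replacing every element by a parallel pair); more precisely, the dual of the $2$-stretching of $N$ is the $2$-thickening of $N^{*}$. For $N=U_{s,s+1}$, which is a single circuit, its $2$-stretching is again a single circuit, now on $2s+2$ elements, so $M\cong U_{2s+1,2s+2}$ is uniform and Lemma~\ref{uniform_convex} finishes this case. (Equivalently, $M^{*}$ is the $2$-thickening of $U_{s,s+1}^{*}=U_{1,s+1}$, namely $U_{1,2s+2}$, again uniform.) For $N=U_{s,s+2}$, I would instead pass to the dual: $M^{*}$ is the $2$-thickening of $U_{s,s+2}^{*}=U_{2,s+2}$, that is, the rank-$2$ matroid obtained from $U_{2,s+2}$ by replacing each of its $s+2$ points by a parallel pair. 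Since $s\ge 1$, this matroid has rank $2$ and, as every element lies in a parallel pair, it is loopless and coloopless, so Theorem~\ref{rank_2_convexity} shows $T_{M^{*}}$ is convex along $x+y=p$, and Lemma~\ref{convexity_duality} transfers this convexity to $T_M$.

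The main obstacle I anticipate is justifying the behaviour of $2$-stretching under duality and on the specific uniform matroids involved: confirming that stretching a circuit yields a larger circuit, that the dual of a uniform $2$-stretching is the corresponding uniform $2$-thickening, and that thickening $U_{2,s+2}$ keeps the rank equal to $2$ while introducing no coloops. Once these structural identifications are secured, each case is resolved by a single appeal to a previously proved convexity result, so no direct computation of $T_M$ along the line $x+y=p$ is required.
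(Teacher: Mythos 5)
Your proposal is correct and follows essentially the same route as the paper's own proof: the same case analysis via Lemma~\ref{lemma_2-stretching}, with the uniform cases handled by Lemma~\ref{uniform_convex}, the $U_{1,2}\oplus U_{1,2}$ case by direct computation of $(x+y)^2$, and the $2$-stretching of $U_{s,s+2}$ handled by dualizing to the $2$-thickening of $U_{2,s+2}$ and invoking Theorem~\ref{rank_2_convexity} together with Lemma~\ref{convexity_duality}. Your write-up is in fact slightly more careful than the paper's, since you explicitly justify the duality between $2$-stretching and $2$-thickening and verify that the thickened matroid is loopless and coloopless.
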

   \begin{proof}
    We analyse the cases for $M$ given in the previous lemma. If $M$
    is isomorphic to $U_{r,r+1}$, the result follows from
    Lemma~\ref{uniform_convex}. If $M$ is isomorphic to $U_{1,2}\oplus
    U_{1,2}$ or  $U_{1,2}$, the corresponding Tutte polynomials are
    $(x+y)^2$ and $x+y$, which in both cases are convex.

    If $M$ is the 2-stretching of $U_{s,s+1}$, then $M$ is isomorphic
    to $U_{r, r+1}$ and the result follows from
    Lemma~\ref{uniform_convex}. If $M$ is the 2-stretching of
    $U_{s,s+2}$, then $M^*$ is the 2-thickening of $U_{2,n}$ which
    is a rank-2 matroid and the result follows from
    Theorem~\ref{rank_2_convexity} and
    Lemma~\ref{convexity_duality}.
   \end{proof}

 Finally, we arrive at the main result of this section.

 \begin{thm}\label{convexity_Tutte}
  If $M$ is a  coloopless paving matroid, then $T_M$ is
  convex along the portion of the line $x+y=p$ lying in the positive quadrant.
 \end{thm}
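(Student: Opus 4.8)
The plan is to prove this by induction on the number of elements of $M$, using the deletion-contraction structure together with the special cases already established. The key observation is that Lemma~\ref{convexity_contraction_deletion} lets me reduce the convexity of $T_M$ to the convexity of $T_{M\setminus e}$ and $T_{M/e}$, provided I can find an element $e$ that is neither a loop nor a coloop and such that both minors $M\setminus e$ and $M/e$ remain coloopless paving matroids (so that the inductive hypothesis applies). Paving matroids are closed under minors, so the ``paving'' property is automatic; the real content is keeping the minors \emph{coloopless}, and for $M/e$ this is not an issue since contracting cannot create a coloop out of an element that was not already a coloop in a natural way---but $M\setminus e$ is exactly where the trouble lives.

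The natural dichotomy is therefore: either there exists an element $e$ of $M$ such that $M\setminus e$ is coloopless (and, being a minor, still paving), or there does not. In the first case I pick such an $e$; then $M\setminus e$ is a coloopless paving matroid on fewer elements, so by induction $T_{M\setminus e}$ is convex along $x+y=p$. For $M/e$ I would argue that, after removing any loops it may acquire (loops do not affect convexity in an essential way, or can be handled by the $U_{1,k+1}\oplus U_{0,l}$ and uniform base cases), it is again a coloopless paving matroid of smaller size, so induction applies there too; then Lemma~\ref{convexity_contraction_deletion} closes this case. The element $e$ must also be verified to be neither a loop nor a coloop of $M$ so that deletion-contraction is valid---but a coloopless paving matroid of rank at least one has no coloops, and a loop would violate the paving condition unless $M$ is degenerate, so generically $e$ is usable.

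In the second case---when $M\setminus e$ has a coloop for \emph{every} element $e$---I am exactly in the hypothesis of Lemma~\ref{lemma_2-stretching_convexity}, which already asserts that $T_M$ is convex along the portion of $x+y=p$ in the positive quadrant. So this case is dispatched immediately by the structural work done in Lemmas~\ref{lemma_2-stretching} and~\ref{lemma_2-stretching_convexity}. The base cases of the induction are the small matroids covered by Lemma~\ref{parallel_convex}, Lemma~\ref{uniform_convex}, and Theorem~\ref{rank_2_convexity}; in particular uniform matroids and rank-2 coloopless loopless matroids are handled outright, giving a firm footing for the induction.

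The step I expect to be the main obstacle is the careful handling of the minor $M/e$ in the first case. Contraction can produce loops (every element parallel to $e$ becomes a loop in $M/e$), and it can also lower the rank in a way that interacts with the paving condition, so I must check that after discarding loops the resulting matroid is still coloopless and paving and strictly smaller, so that the inductive hypothesis genuinely applies. Loops contribute only a factor of $y^{(\text{number of loops})}$ to the Tutte polynomial, and since $y^j$ is convex along $x+y=p$ and products/sums of the relevant convex pieces behave well (as exploited in Lemma~\ref{parallel_convex}), I anticipate that the loop factor can be absorbed cleanly; but making this rigorous, together with confirming that I can always select an $e$ for which $M\setminus e$ is coloopless whenever we are not in the second-case hypothesis, is the delicate part of the argument.
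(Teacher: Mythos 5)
Your proposal is correct and follows essentially the same route as the paper's own proof: the identical dichotomy (either some deletion $M\setminus e$ is coloopless, or every deletion has a coloop, in which case Lemma~\ref{lemma_2-stretching_convexity} applies), with Lemma~\ref{convexity_contraction_deletion} driving the induction and the same base cases. The obstacle you flag about $M/e$ is in fact immediate: for $e$ not a loop, $f$ is a coloop of $M/e$ if and only if $f$ is a coloop of $M$ (since $r_{M/e}(E\setminus\{e,f\})=r_M(E\setminus f)-1$), and if $M/e$ acquires loops then, being paving, it has rank at most $1$ and is of the form $U_{1,k+1}\oplus U_{0,l}$, so it is covered directly by Lemma~\ref{parallel_convex} --- exactly how the paper disposes of loops at the start of its induction, with no need to ``absorb'' loop factors separately.
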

  \begin{proof}
   If $M$ has a loop, then $M$ has rank 1 and it is isomorphic to
   $U_{1,k+1}\oplus U_{0,l}$ with $l,k\geq 1$ and the result follows
   from Lemma~\ref{parallel_convex}.

   Otherwise, every element of $M$ is neither a loop nor a coloop. If there
   is an element $e$ such that $M\setminus e$ has no coloop, then both
   $M/e$ and $M\setminus e$ are coloopless paving matroids and the
   result follows from Lemma~\ref{convexity_contraction_deletion}.

   So, we can assume that  for all $e$, $M\setminus e$ has a coloop. Then
   the result follows from Lemma~\ref{lemma_2-stretching_convexity}.
   \end{proof}

 Hence, subject to an affirmative answer to Welsh's
problem mentioned earlier, we have proved Conjecture~\ref{old} and
Theorem~\ref{convexity_Tutte} for asymptotically almost all matroids.

 Paving matroids are not closed under duality but using
 Lemma~\ref{convexity_duality} we obtain the convexity
 of the Tutte polynomial for a bigger class of matroids.

  \begin{cor}
  If $M$ or $M^{*}$ is a  coloopless paving matroid, then $T_M$ is
  convex  along the portion of the line $x+y=p$ lying in the positive
  quadrant.
 \end{cor}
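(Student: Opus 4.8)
The plan is to reduce immediately to Theorem~\ref{convexity_Tutte} by a two-case analysis according to which of $M$, $M^{*}$ is the coloopless paving matroid. First, if $M$ itself is a coloopless paving matroid, there is nothing to prove: Theorem~\ref{convexity_Tutte} already asserts that $T_M$ is convex along the portion of the line $x+y=p$ lying in the positive quadrant.

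The remaining case is when $M^{*}$ is a coloopless paving matroid while $M$ need not be (recall that, as noted just before the statement, the class of paving matroids is not closed under duality, which is exactly why the corollary genuinely enlarges the class covered by Theorem~\ref{convexity_Tutte}). Here I would apply Theorem~\ref{convexity_Tutte} to $M^{*}$ to conclude that $T_{M^{*}}$ is convex along the relevant portion of $x+y=p$, and then invoke Lemma~\ref{convexity_duality}, which states that $T_M$ and $T_{M^{*}}$ are simultaneously convex or simultaneously non-convex on this domain. Since $T_{M^{*}}$ is convex, so is $T_M$, completing the proof.

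I do not expect any genuine obstacle here: the entire mathematical content is carried by Theorem~\ref{convexity_Tutte}, with Lemma~\ref{convexity_duality} serving only to transport convexity across the relation $T_M(x,y)=T_{M^{*}}(y,x)$. The single point worth keeping straight is that the coordinate swap $(x,y)\mapsto(y,x)$ fixes each line $x+y=p$ setwise and preserves the positive quadrant, so that parametrizing the segment by $x=t$, $y=p-t$ turns the identity $T_M(t,p-t)=T_{M^{*}}(p-t,t)$ into an affine reparametrization $t\mapsto p-t$ of the domain; since convexity is invariant under such reparametrizations, convexity of $T_{M^{*}}$ along $x+y=p$ really does pull back to convexity of $T_M$ along the same line. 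This is precisely the equivalence packaged in Lemma~\ref{convexity_duality}, so no separate verification is needed.
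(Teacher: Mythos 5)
Your proposal is correct and follows exactly the paper's (implicit) argument: the paper derives this corollary from Theorem~\ref{convexity_Tutte} combined with Lemma~\ref{convexity_duality}, precisely as you do. Your extra check that the swap $(x,y)\mapsto(y,x)$ acts as an affine reparametrization of each segment $x+y=p$ is a sound (if not strictly necessary) justification of what Lemma~\ref{convexity_duality} already packages.
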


  By Theorem~\ref{paving_DB_or_UI}, the class of matroids $M$ such
  that either $M$ or $M^{*}$ is a coloopless paving matroid is contained in
  the class of matroids that contains two disjoint bases or whose
  ground set is the union of two bases.   Thus, we have a strengthening
  of Theorem~\ref{main}.

 \begin{cor}\label{strengthening_main}
  If $M$ or $M^{*}$ is a  coloopless paving matroid, then $T_M$ satisfies
  inequality (\ref{main_inequality})  for $a\geq 0$.
 \end{cor}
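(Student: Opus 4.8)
The plan is to derive this inequality as an immediate corollary of the convexity established in the preceding result, which is precisely what allows the $a\geq 2$ hypothesis of Theorem~\ref{main} to be dropped. Since $M$ or $M^{*}$ is a coloopless paving matroid, the preceding corollary (built from Theorem~\ref{convexity_Tutte} and the duality Lemma~\ref{convexity_duality}) guarantees that $T_M$ is convex along the portion of any line $x+y=p$ lying in the positive quadrant.

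First I would dispose of the trivial case $a=0$, where both sides of (\ref{main_inequality}) equal $T_M(0,0)$ and the inequality holds with equality. For $a>0$ I would apply convexity along the line $x+y=2a$ to the two endpoints $(2a,0)$ and $(0,2a)$, taking $t=1/2$. Their midpoint is exactly $(a,a)$, so convexity yields
\[
\tfrac{1}{2}T_M(2a,0)+\tfrac{1}{2}T_M(0,2a)\geq T_M(a,a).
\]

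Finally I would invoke the elementary fact that the maximum of two real numbers is at least their average, giving
\[
\max\{T_M(2a,0),T_M(0,2a)\}\geq \tfrac{1}{2}\bigl(T_M(2a,0)+T_M(0,2a)\bigr)\geq T_M(a,a),
\]
which is exactly (\ref{main_inequality}). As this chain is valid for every $a\geq 0$, it strengthens Theorem~\ref{main}, whose argument via Theorem~\ref{thm:max_degree} only produced the inequality for $a\geq 2$.

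There is no genuine obstacle once the convexity theorem is available: the whole content of the statement lies in the preceding corollary, and the remaining steps are the midpoint specialization and the max--average comparison. The only point requiring a moment's care is that the endpoints $(2a,0)$ and $(0,2a)$ sit on the coordinate axes; this is admissible because the convexity statement asks only that $x_1,x_2,y_1,y_2$ be non-negative, so the boundary of the quadrant is included.
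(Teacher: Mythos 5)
Your proposal is correct and follows the paper's own (implicit) argument exactly: the corollary is obtained from the preceding convexity result for $M$ or $M^{*}$ a coloopless paving matroid, specialized at $t=1/2$ to the endpoints $(2a,0)$ and $(0,2a)$ of the segment $x+y=2a$, followed by the max--average comparison. Your handling of the boundary points and the trivial case $a=0$ is consistent with the paper's formulation of the convexity inequality, which only requires non-negativity of the coordinates.
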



\section{The Merino-Welsh  conjecture}
\label{sec:conjecture}

In this section we return to the original Merino--Welsh conjecture
(Conjecture~\ref{old}) and establish that its conclusion holds for
some fairly
specific classes of graphs and matroids. Recall that the conclusion of
the conjecture is certainly not true for all graphs. Taking any graph
and adding a loop and a bridge results in a graph that does not
satisfy~\eqref{eq:MW}. However, the condition on the connectivity may
not be the most natural because if $G$ consists of 2 cycles of length
2 sharing a common vertex, then the graphic matroid $M(G)$ satisfies
(\ref{eq:main}) for all $a\geq 0$. So (\ref{eq:MW}) is satisfied by some
graphs that are not 2-connected.

\subsection{Wheels and whirls}\label{wheels}
 In this subsection we consider  wheels, a well-known class of
 self-dual planar graphs, and whirls, a related class of matroids which
 are also self-dual. The  wheel
 graph  $W_n$ has $n+1$ vertices and $2n$ edges. The  vertices
 $\{1,\cdots, n\}$ form an $n$-cycle while the vertex $0$ is adjacent
 to every vertex in this cycle. The whirl $W^{n}$ is the matroid with
 ground set $E(W^{n})=E(W_{n})$, while the set of bases of $W^{n}$
 consists of the edge set in the $n$-cycle of $W_n$ together with all
 edge sets of spanning trees of $W_n$, see~\cite{Oxley}.

 It is well-known that $\tau(W_n)=L_{2n}-2$, for $n\geq 1$, where
 $L_k$ is the $k$th-Lucas number which is defined recursively by
 $L_1=1$, $L_2=3$ and $L_k=L_{k-1}+L_{k-2}$ for $k\geq 3$. This result
 was proved by Sedl\'a\v cek \cite{Sed} and also by Myers
 \cite{My}. Using the analogy of
 Binet's Fibonacci formula for Lucas numbers we get
$$
   \tau(W_n)=\left( \frac{3+\sqrt{5}}{2}\right)^{n}
             +\left(\frac{3-\sqrt{5}}{2}\right)^{n}-2.
$$
The same formula can be obtained directly by
using equation~(\ref{eq:deletion-contraction}) for $T_{W_n}(1,1)$ and
then solving
the corresponding recurrence relation.

The chromatic polynomial of $W_n$ is known, see \cite{Biggs}, and is
equal to $\chi_{W_n}(x)$ = $x(x-2)^n+(-1)^n x(x-2)$.  Now, applying the
famous result of R. Stanley \cite{Stanley}
 that relates the number of acyclic orientations and the chromatic polynomial,
 namely $\alpha(G)=|\chi_{G}(-1)|$, we get  $\alpha(W_n)= 3^{n}-3$.
 These results together yield the following

\begin{thm}
   For all $n\geq 2$, $\alpha(W_{n})\geq \tau(W_{n})$ and $M(W_{n})$
   satisfies Conjecture~\ref{old}.
\end{thm}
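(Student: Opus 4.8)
The plan is to establish the two numerical facts $\alpha(W_n) = 3^n - 3$ and $\tau(W_n) = \left(\frac{3+\sqrt{5}}{2}\right)^n + \left(\frac{3-\sqrt{5}}{2}\right)^n - 2$ — both of which the preceding text has already derived — and then reduce the statement to the elementary inequality $3^n - 3 \geq \left(\frac{3+\sqrt{5}}{2}\right)^n + \left(\frac{3-\sqrt{5}}{2}\right)^n - 2$ for all $n \geq 2$. Since $W_n$ is self-dual, we have $\alpha^*(W_n) = \alpha(W_n)$, so $\max\{\alpha(W_n), \alpha^*(W_n)\} = \alpha(W_n)$, and Conjecture~\ref{old} reduces to exactly the single inequality $\alpha(W_n) \geq \tau(W_n)$.

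To prove the reduced inequality, I would first note that the subdominant Lucas term $\left(\frac{3-\sqrt{5}}{2}\right)^n$ is positive but less than $1$ for all $n \geq 1$ (since $0 < \frac{3-\sqrt{5}}{2} < 1$), so it is bounded above by, say, $1$. Thus it suffices to show
\[
3^n - 3 \geq \left(\frac{3+\sqrt{5}}{2}\right)^n - 1,
\]
i.e. $3^n - \left(\frac{3+\sqrt{5}}{2}\right)^n \geq 2$. Writing $\phi = \frac{3+\sqrt{5}}{2} \approx 2.618 < 3$, the left side is a difference of two exponentials with the larger base attached to $3^n$; the gap $3^n - \phi^n$ grows without bound since $3 > \phi$, so the inequality is easily verified at $n = 2$ and then shown to be increasing in $n$. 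A clean way to finish is induction: the base case $n = 2$ gives $\alpha(W_2) = 6$ and $\tau(W_2) = 5$ (checking against the closed forms), and for the inductive step one observes that multiplying $3^n$ by $3$ outpaces multiplying $\phi^n$ by $\phi < 3$, so the difference strictly increases.

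The main obstacle, such as it is, will be handling the subdominant Lucas term cleanly rather than the dominant-term comparison, which is routine. One must be careful that $\left(\frac{3-\sqrt{5}}{2}\right)^n$ is added (not subtracted) in $\tau(W_n)$, so it works slightly against the inequality; the bound $\left(\frac{3-\sqrt{5}}{2}\right)^n < 1$ absorbs this harmlessly. An alternative that avoids the irrational arithmetic entirely is to work directly with the Lucas recurrence: set $d_n = \alpha(W_n) - \tau(W_n) = 3^n - L_{2n} + 1 - 2$, establish a linear recurrence for $L_{2n}$ (namely $L_{2n} = 3L_{2n-2} - L_{2n-4}$), and verify $d_n \geq 0$ by induction using the two base cases $n = 2, 3$. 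I expect the Lucas-recurrence route to be the more robust, as it keeps all quantities integral and sidesteps estimating the golden-ratio powers.
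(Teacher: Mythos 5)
Your proposal is correct and follows essentially the same route as the paper: both rest on the closed forms $\alpha(W_n)=3^n-3$ (from the chromatic polynomial and Stanley's theorem) and $\tau(W_n)=\left(\frac{3+\sqrt{5}}{2}\right)^n+\left(\frac{3-\sqrt{5}}{2}\right)^n-2$ (the Lucas-number formula), from which the inequality $\alpha(W_n)\geq\tau(W_n)$ follows by a routine comparison that the paper leaves implicit and you spell out. Your self-duality remark is harmless but unnecessary, since $\alpha(W_n)\geq\tau(W_n)$ alone already gives $\max\{T(2,0),T(0,2)\}\geq T(1,1)$.
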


The Tutte polynomials of  $W^{n}$ and  $M(W_{n})$ are related by the
 equality,  $T_{W^{n}}(x,y)=T_{W_{n}}(x,y)-xy+x+y$. Thus, obtain the
 following result

\begin{thm}
   For all $n\geq 2$, $T_{W^{n}}(2,0)\geq T_{W^{n}}(1,1)$ and $W^{n}$
   satisfies equation~(\ref{eq:MW}).
\end{thm}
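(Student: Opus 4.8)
The plan is to collapse both evaluations of $T_{W^{n}}$ into the closed forms already computed for the wheel, and then to reduce the claim to a single elementary inequality between $3^n$ and a Lucas number. First I would use the stated relation $T_{W^{n}}(x,y)=T_{W_{n}}(x,y)-xy+x+y$ to translate the two points of interest. Substituting $(x,y)=(2,0)$ gives $T_{W^{n}}(2,0)=\alpha(W_n)+2=3^n-1$, and substituting $(x,y)=(1,1)$ gives $T_{W^{n}}(1,1)=\tau(W_n)+1=L_{2n}-1$, where I use $\alpha(W_n)=3^n-3$ and $\tau(W_n)=L_{2n}-2$ from the preceding discussion. Hence the asserted inequality $T_{W^{n}}(2,0)\geq T_{W^{n}}(1,1)$ is equivalent to the clean statement $3^n\geq L_{2n}$.

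Next I would establish $3^n\geq L_{2n}$. Using the Binet-type formula quoted above for $\tau(W_n)$, write $L_{2n}=a^n+b^n$ where $a=(3+\sqrt{5})/2$ and $b=(3-\sqrt{5})/2$ are the two roots of $t^2-3t+1=0$; in particular $a+b=3$, $ab=1$, and crucially both $a$ and $b$ are \emph{positive}. The binomial theorem then gives
\[
3^n=(a+b)^n=\sum_{k=0}^{n}\binom{n}{k}a^k b^{n-k}\geq a^n+b^n=L_{2n},
\]
since every summand is nonnegative and the extreme terms $k=n$ and $k=0$ contribute exactly $a^n$ and $b^n$. For $n\geq 2$ at least one middle term is strictly positive, so the inequality is in fact strict, but the weak inequality is all that is required for the theorem.

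Finally I would deduce (\ref{eq:MW}). Since whirls are self-dual, $T_{W^{n}}(x,y)=T_{(W^{n})^{*}}(y,x)=T_{W^{n}}(y,x)$, so $T_{W^{n}}(0,2)=T_{W^{n}}(2,0)$ and in particular $\max\{T_{W^{n}}(2,0),T_{W^{n}}(0,2)\}=T_{W^{n}}(2,0)$; alternatively one may simply note that $T_{W^{n}}(0,2)\geq 0$. Either way the first part of the theorem yields
\[
\max\{T_{W^{n}}(2,0),T_{W^{n}}(0,2)\}\geq T_{W^{n}}(2,0)\geq T_{W^{n}}(1,1),
\]
which is exactly (\ref{eq:MW}).

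I do not expect any serious obstacle: the entire argument hinges on the two evaluations collapsing to $3^n-1$ and $L_{2n}-1$, and then on the positivity of $a$ and $b$, which makes the binomial comparison immediate. The only points requiring care are the bookkeeping of the correction term $-xy+x+y$ at the two evaluation points, and the verification of the identity $L_{2n}=((3+\sqrt{5})/2)^n+((3-\sqrt{5})/2)^n$ (equivalently $\phi^{2n}+\psi^{2n}$ for the golden-ratio conjugates $\phi,\psi$), both of which are routine.
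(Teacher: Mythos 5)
Your proposal is correct and takes essentially the same route as the paper: it transfers the whirl evaluations to the wheel via the relation $T_{W^{n}}(x,y)=T_{W_{n}}(x,y)-xy+x+y$ and then compares the closed forms coming from $\alpha(W_n)=3^n-3$ and $\tau(W_n)=L_{2n}-2$. The only difference is that you spell out the verification of $3^n\geq L_{2n}$ by the binomial theorem applied to $a+b=3$ with $a=(3+\sqrt{5})/2$, $b=(3-\sqrt{5})/2$, a step the paper leaves implicit in its preceding theorem on wheels.
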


 \subsection{3-regular graphs with girth at least 5}\label{3-regular}

 For  3-regular graphs  with
 girth at least 5 a
 lower  bound for the number of  acyclic orientations,
\[
  \alpha(G)\geq (2^{3/8} 3^{3/8} 4^{1/8})^n\;,
\]
is given in \cite{KhSc}, where $n$ is the number of vertices of
$G$. On the other hand, the following upper
bound for the number of spanning trees in a 3-regular graph $G$ is
 given in \cite{ChYao}.
\[ \tau(G)\leq
        \frac{2\beta}{3 n}
        e^{\frac{12}{\sqrt{\pi}}
             \left( \frac{1}{\beta}\right)^\frac{5}{2}}
        \left( \frac{4}{\sqrt{3}}\right)^n\;,
\]
 where $\beta=\lceil \ln(n)/\ln(9/8)\rceil$. From the formulae we
 obtain the following

\begin{thm} If $G$ is a 3-regular  graph of girth at least 5, we have
 $\tau(G)< \alpha(G)$ and $M(G)$  satisfies Conjecture~\ref{old}.
\end{thm}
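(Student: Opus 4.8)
The plan is to derive the strict inequality $\tau(G)<\alpha(G)$ directly from the two quoted estimates, and to observe that it already yields the conclusion about Conjecture~\ref{old}. Since $\alpha(G)=T_G(2,0)$ and $\tau(G)=T_G(1,1)$, the inequality $\tau(G)<\alpha(G)$ gives $T_G(1,1)<T_G(2,0)\leq\max\{T_G(2,0),T_G(0,2)\}$, which is precisely~\eqref{eq:MW}. Thus the entire content of the theorem is the comparison $\tau(G)<\alpha(G)$.

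Both quoted bounds are, up to a subexponential factor, of the form $c^{n}$, so the decisive step is to compare the two bases. Write $c_1=2^{3/8}3^{3/8}4^{1/8}=2^{5/8}3^{3/8}$ for the base in the lower bound on $\alpha(G)$ and $c_2=4/\sqrt3$ for the base in the upper bound on $\tau(G)$. I would prove $c_2<c_1$ by raising both to the eighth power: $c_1^{8}=2^{5}3^{3}=864$, whereas $c_2^{8}=4^{8}/3^{4}=65536/81$, and $65536<81\cdot 864=69984$. Hence $c_2/c_1<1$, so at the exponential scale the $\tau$-bound is strictly dominated by the $\alpha$-bound.

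It then remains to absorb the subexponential prefactor $f(n)=\frac{2\beta}{3n}e^{(12/\sqrt\pi)(1/\beta)^{5/2}}$, where $\beta=\lceil\ln n/\ln(9/8)\rceil$. Since $\beta=\Theta(\ln n)$, the exponential factor tends to $1$ while $\frac{2\beta}{3n}\to 0$, so $f(n)\to 0$ and therefore $\tau(G)/\alpha(G)\leq f(n)(c_2/c_1)^{n}\to 0$. Consequently there is an explicit threshold $n_0$ with $f(n)(c_2/c_1)^{n}<1$ for all $n\geq n_0$, which gives $\tau(G)<\alpha(G)$ for every such $G$. Here I would use that a cubic graph has an even number of vertices (handshake lemma) and that the Moore bound forces $n\geq 10$ when the girth is at least $5$, so only finitely many values of $n$ lie below $n_0$.

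The main obstacle is exactly this threshold. For the smallest admissible orders the crude product $f(n)(c_2/c_1)^{n}$ exceeds $1$ (for instance at $n=10$ the guaranteed upper bound on $\tau$ is larger than the guaranteed lower bound on $\alpha$), so the analytic comparison alone does not settle the base cases. I would therefore pin $n_0$ down explicitly by a short monotonicity estimate, showing that the product drops below $1$ for all even $n\geq 14$, and then dispatch the finitely many cubic graphs of girth at least $5$ on $10$ and $12$ vertices by direct evaluation of $\tau$ and $\alpha$. In particular the unique such graph on $10$ vertices is the Petersen graph, for which $\tau=2000$, comfortably below its value of $\alpha$ (indeed already below the guaranteed lower bound $c_1^{10}$), so the inequality holds in the base cases as well.
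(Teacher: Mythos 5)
Your proposal takes the same basic route as the paper---pitting the Kahale--Schulman lower bound on $\alpha(G)$ from \cite{KhSc} against the Chung--Yau upper bound on $\tau(G)$ from \cite{ChYao}---but you carry the comparison out rigorously, and in doing so you have exposed a genuine gap in the paper's own argument. The paper's proof consists of quoting the two bounds and asserting that the theorem follows ``from the formulae''; this tacitly assumes the quoted bounds decide the inequality for every admissible order $n$, which is false. Your numerics are correct: the bases satisfy $c_2<c_1$ since $c_1^8=2^5 3^3=864$ while $c_2^8=65536/81\approx 809$, but the prefactor $\frac{2\beta}{3n}e^{(12/\sqrt{\pi})\beta^{-5/2}}$ exceeds $1$ for small $n$, and indeed at $n=10$ the quoted upper bound on $\tau$ is roughly $5775$ while the quoted lower bound on $\alpha$ is $864^{5/4}\approx 4684$; the failure persists at $n=12$ (roughly $28200$ versus $25400$), and the crossover for even $n$ occurs exactly at $n=14$. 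So the statement, as argued in the paper, is not actually established for the Petersen graph or for the $12$-vertex graphs; your threshold-plus-base-cases structure is what a complete proof requires. Your treatment of $n=10$ is correct (the Petersen graph is the unique cubic graph of girth $5$ on $10$ vertices, and $\tau=2000<4684$), and your monotonicity claim for $n\geq 14$ is true: replacing $\beta$ by the smooth majorant $\ln n/\ln(9/8)+1$ makes the product $f(n)(c_2/c_1)^n$ decreasing in $n$, and its value at $n=14$ is about $0.996<1$.

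The one loose end is $n=12$: you promise to dispatch it by direct evaluation but never do. There are exactly two cubic graphs of girth at least $5$ on $12$ vertices, so this is a finite computation, but it must actually be performed, since the crude bounds genuinely fail there; note that this is a check the paper itself never makes. Once that is done, your argument is complete---and, unlike the proof sketched in the paper, it is airtight.
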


 \subsection{Complete graphs}
\label{complete_bipartite}
 It is natural to check if Conjecture~\ref{old} is  true for complete
 graphs and complete bipartite graphs.

 A classical result of Cayley~\cite{Biggs} states that $\tau(K_n)=n^{n-2}$.
 For $K_3$ we have
 $\alpha(K_3)=6>3=\tau(K_3)$, thus $K_3$ satisfies
 Conjecture~\ref{old}.

 We use the following lemma which has an easy proof, see~\cite{CnMr}.
 \begin{lemma}\label{simplicial}
   If $G$ is a 2-connected graph with a
   vertex $v$ of    degree $d$, then  $(2^d-2)\alpha^*(G-v) \leq
   \alpha^*(G)$.
 \end{lemma}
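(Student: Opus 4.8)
The plan is to prove the inequality by a direct counting argument: I would realise the quantity $(2^d-2)\alpha^*(G-v)$ as the number of totally cyclic orientations of $G$ of a special form, and then observe that these form a subset of \emph{all} totally cyclic orientations of $G$. Recall that $\alpha^*(H)=T_H(0,2)$ counts the totally cyclic orientations of $H$, that is, the orientations in which every edge lies in a directed cycle. Since $G$ is 2-connected, $G-v$ is connected, and for a connected graph an orientation is totally cyclic precisely when it is strongly connected (every edge of a connected graph lies in a directed cycle if and only if all its vertices lie in a single strong component). If $G-v$ admits no totally cyclic orientation then $\alpha^*(G-v)=0$ and the inequality is immediate, so I would assume $\alpha^*(G-v)\geq 1$.

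Let $e_1,\dots,e_d$ be the edges incident with $v$, with the other endpoint of $e_i$ being $u_i$ (the $u_i$ need not be distinct). The key claim is that every totally cyclic orientation $\sigma$ of $G-v$ extends to a totally cyclic orientation of $G$ exactly when the $d$ edges at $v$ are oriented so that $v$ has at least one in-edge and at least one out-edge, giving precisely $2^d-2$ admissible extensions of $\sigma$. For the forward direction I would argue as follows: the edges of $G-v$ remain in the directed cycles they already occupied under $\sigma$, so it suffices to place each $e_i$ in a directed cycle. Given an out-edge $v\to u_j$, strong connectivity of $\sigma$ provides a directed path in $G-v$ from $u_j$ to $u_i$, where $u_i\to v$ is any in-edge at $v$ (one exists by hypothesis), and $v\to u_j\to\cdots\to u_i\to v$ is the required cycle; the symmetric argument handles each in-edge. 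Conversely, if all edges at $v$ point inwards, or all outwards, then no edge at $v$ can lie in a directed cycle, so these two orientations are excluded, leaving exactly $2^d-2$ valid choices.

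Distinct totally cyclic orientations $\sigma$ of $G-v$ yield distinct orientations of $G$, since they already differ on the edges of $G-v$. Hence the orientations of $G$ obtained in this way number exactly $(2^d-2)\alpha^*(G-v)$, and each is totally cyclic by the claim. As these are a subset of all totally cyclic orientations of $G$, I conclude $\alpha^*(G)\geq(2^d-2)\alpha^*(G-v)$, which is the desired inequality.

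I expect the main obstacle to be the careful verification of the key claim, in particular the forward direction showing that each edge incident with $v$ lies in a directed cycle. This step rests on the equivalence of totally cyclic and strongly connected orientations for the connected graph $G-v$, and must be phrased so that it remains valid when $G$ has several edges meeting $v$ or when some of the $u_i$ coincide (in which case the relevant directed cycle may degenerate to a digon $u_i\to v\to u_i$). Once this local extension count is established, the global inequality follows at once from the subset observation.
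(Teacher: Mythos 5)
Your proof is correct. There is nothing in the paper to compare it against: the authors do not prove this lemma, but simply remark that it ``has an easy proof'' and cite \cite{CnMr}. Your argument---each totally cyclic (equivalently, since $G-v$ is connected, strongly connected) orientation of $G-v$ extends to a totally cyclic orientation of $G$ under each of the $2^d-2$ ways of orienting the edges at $v$ so that $v$ has both an in-edge and an out-edge, and all the resulting orientations of $G$ are distinct---is exactly the standard counting argument one would expect the cited reference to contain, and it yields the inequality because these extensions form a subset of all totally cyclic orientations of $G$. The delicate points are all handled: the equivalence of totally cyclic and strongly connected uses connectivity of $G-v$, which is precisely where $2$-connectedness of $G$ enters; the case where $G-v$ has a bridge (hence $\alpha^*(G-v)=0$) is dispatched trivially; and the directed cycles through $v$ degenerate to digons when the relevant neighbours coincide, which is still legitimate in a multigraph.
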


 We will prove that $\alpha^{*}(K_{n})\geq n^{n-2}$, for $n\geq
 4$. When $n=4$, we have $\alpha^{*}(K_{4})=24>16=\tau(K_{4})$. We
 proceed by induction on $n$.
\begin{equation*}
\begin{split}
 \tau(K_{n+1})= (n+1)^{n-1}&= \left( \frac{n+1}{n}\right)^{n}
                             \left( \frac{n}{n+1}\right)^{2}
                             (n+1)\tau(K_{n})\\
                           &\leq e (n+1) \tau(K_{n})
                             \leq (2^{n}-2) \tau(K_{n})\\
                           &\leq (2^{n}-2) \alpha^{*}(K_{n}).
\end{split}
\end{equation*}
 The last quantity is less than or equal $\alpha^{*}(K_{n+1})$ by the
 previous lemma.

\begin{thm}\label{thm:complete}
 For all $ n\geq 3$, $M(K_{n})$  satisfies  Conjecture~\ref{old}.
\end{thm}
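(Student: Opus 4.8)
The plan is to verify $\max\{\alpha(K_n),\alpha^*(K_n)\}\geq\tau(K_n)$ by exhibiting, for each $n$, one of the two orientation counts that already dominates the spanning-tree count. Since Cayley's formula supplies the clean closed form $\tau(K_n)=n^{n-2}$, I would split off the single small case $n=3$, where the acyclic count wins outright ($\alpha(K_3)=6>3=\tau(K_3)$), and then aim to prove the stronger statement $\alpha^*(K_n)\geq n^{n-2}=\tau(K_n)$ for all $n\geq 4$, so that the totally cyclic count carries the remaining cases.

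For the range $n\geq 4$ the natural engine is an induction fed by Lemma~\ref{simplicial}. Every vertex of $K_{n+1}$ has degree $n$ and its deletion returns $K_n$, so the lemma yields the multiplicative recursion $(2^n-2)\,\alpha^*(K_n)\leq\alpha^*(K_{n+1})$. Granting the inductive hypothesis $\tau(K_n)\leq\alpha^*(K_n)$, it then suffices to show that $\tau$ grows no faster than this factor, namely $\tau(K_{n+1})\leq(2^n-2)\,\tau(K_n)$; chaining these gives $\tau(K_{n+1})\leq(2^n-2)\tau(K_n)\leq(2^n-2)\alpha^*(K_n)\leq\alpha^*(K_{n+1})$. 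The base case $n=4$ must be checked by hand, where $\alpha^*(K_4)=24>16=\tau(K_4)$.

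The remaining work is purely analytic. Writing the ratio of consecutive tree counts as
\[
\frac{\tau(K_{n+1})}{\tau(K_n)}=\frac{(n+1)^{n-1}}{n^{n-2}}=\left(\frac{n+1}{n}\right)^{n}\left(\frac{n}{n+1}\right)^{2}(n+1),
\]
I would bound the first factor by $e$ and the second by $1$, reducing the required growth estimate to the elementary exponential-versus-linear inequality $e(n+1)\leq 2^n-2$.

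The main obstacle is precisely this last inequality at the bottom of its range: at $n=4$ it reads $5e\approx 13.59\leq 14$, which holds but only barely, so the argument genuinely needs the sharp constant $e$ for $(1+1/n)^n$ rather than any cruder bound, and the $n=4$ anchor of the $\alpha^*$-induction has to be confirmed by explicit computation rather than by the asymptotic comparison. Once past $n=4$ the gap widens rapidly and the induction runs without further difficulty.
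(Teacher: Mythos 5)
Your proposal is correct and follows essentially the same route as the paper: the $n=3$ case via $\alpha(K_3)=6>3$, the base case $\alpha^*(K_4)=24>16$, and then induction via Lemma~\ref{simplicial} applied to a vertex of degree $n$ in $K_{n+1}$, with the same factorization $(n+1)^{n-1}/n^{n-2}=\left(\frac{n+1}{n}\right)^{n}\left(\frac{n}{n+1}\right)^{2}(n+1)\leq e(n+1)\leq 2^n-2$. Your observation that the bound is tight at $n=4$ (where $5e\approx 13.59$ against $14$) is a useful sanity check that the paper leaves implicit.
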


 The technique used for complete graphs can be used to prove the
 Conjecture~\ref{old} in the case of threshold graphs, a type of
 chordal graphs, see~\cite{CnMr}.
 Also in~\cite{CnMr} complete bipartite graphs are considered and the
 authors prove the following

\begin{thm}\label{thm:bipartite}
 For all $m\geq n\geq 2$, $M(K_{n,m})$  satisfies
 Conjecture~\ref{old}.
\end{thm}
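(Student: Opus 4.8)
The plan is to prove the stronger dichotomy $\max\{\alpha(K_{n,m}),\alpha^*(K_{n,m})\}\geq\tau(K_{n,m})$ by splitting into a ``thin'' regime ($n\in\{2,3\}$), where the number of acyclic orientations $\alpha$ dominates, and a ``thick'' regime ($n\geq 4$), where the number of totally cyclic orientations $\alpha^*$ dominates and one can run an induction parallel to the complete-graph argument of Theorem~\ref{thm:complete}. Throughout I would use the classical count $\tau(K_{n,m})=m^{n-1}n^{m-1}$.

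For $n\in\{2,3\}$ I would compute $\alpha(K_{n,m})$ explicitly. Using the chromatic polynomial of $K_{n,m}$, obtained by conditioning on the number of distinct colours used on the size-$n$ side, together with Stanley's theorem $\alpha(G)=|\chi_G(-1)|$, one gets $\alpha(K_{2,m})=2\cdot 3^m-2^m$ and $\alpha(K_{3,m})=6\cdot 4^m-6\cdot 3^m+2^m$. Comparing these against $\tau(K_{2,m})=m\,2^{m-1}$ and $\tau(K_{3,m})=m^2 3^{m-1}$ is then a routine growth-rate comparison, since in each case the leading exponential term of $\alpha$ has a strictly larger base and so dominates the polynomial-times-exponential expression for $\tau$.

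For $n\geq 4$ I would mimic the complete-graph induction using Lemma~\ref{simplicial}. Since every vertex on the size-$m$ side of $K_{n,m}$ has degree $n$ and $K_{n,m}$ is $2$-connected, deleting such a vertex gives $K_{n,m-1}$, and Lemma~\ref{simplicial} yields $\alpha^*(K_{n,m})\geq(2^n-2)\,\alpha^*(K_{n,m-1})$; likewise $\alpha^*(K_{n,n})\geq(2^n-2)\,\alpha^*(K_{n-1,n})$. Running the induction on $n+m$, each step reduces via the induction hypothesis to an inequality of the shape $(2^n-2)\,\tau(K_{n,m-1})\geq\tau(K_{n,m})$, and similarly for the step $K_{n,n}\to K_{n-1,n}$, which in both cases simplifies to $2^n-2\geq n\bigl(1+\tfrac{1}{m-1}\bigr)^{n-1}$. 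Using $n\leq m$ I would bound $\bigl(1+\tfrac{1}{m-1}\bigr)^{n-1}\leq\bigl(1+\tfrac{1}{m-1}\bigr)^{m-1}<e$, so that it suffices to have $2^n-2\geq ne$, which holds for every $n\geq 4$.

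The main obstacle is the base case $K_{4,4}$: the induction bottoms out there, so I must verify directly that $\alpha^*(K_{4,4})\geq\tau(K_{4,4})=4096$. This is a finite computation, for instance by evaluating $T_{K_{4,4}}(0,2)$, but it is the one place where no structural shortcut is available. A secondary point to check carefully is that the two-regime split is exhaustive and that the inductive reductions never leave the regime $m\geq n\geq 4$; in particular the step $K_{n,n}\to K_{n-1,n}$ must stop at $n=4$ rather than producing $K_{3,4}$, which is precisely why $K_{4,4}$, and not a thinner graph, is the correct base case.
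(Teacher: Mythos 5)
Your overall strategy is sound, and it is essentially the approach this theorem rests on: the paper itself gives no proof of Theorem~\ref{thm:bipartite}, deferring to~\cite{CnMr}, and the machinery there is the same template the paper demonstrates on complete graphs in Theorem~\ref{thm:complete}, namely Lemma~\ref{simplicial} combined with an induction that compares $2^{d}-2$ against the growth ratio of $\tau$. Your checkable steps are all correct: $\tau(K_{n,m})=m^{n-1}n^{m-1}$; the values $\alpha(K_{2,m})=2\cdot 3^m-2^m$ and $\alpha(K_{3,m})=6\cdot 4^m-6\cdot 3^m+2^m$ do follow from the chromatic polynomial and Stanley's theorem, and they dominate $m\,2^{m-1}$ and $m^2 3^{m-1}$ for all $m\geq n$; and the inductive step reduces correctly to $2^n-2\geq n\left(1+\tfrac{1}{m-1}\right)^{n-1}$, which your bound $\left(1+\tfrac{1}{m-1}\right)^{n-1}<e$ (valid precisely because $n\leq m$) settles for $n\geq 4$, since $2^4-2=14>4e$.

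The one genuine gap is the one you flagged: the base case $\alpha^*(K_{4,4})\geq\tau(K_{4,4})=4096$ is asserted but never verified, so as written the proof is incomplete at exactly that point. The claim is true, and, contrary to your remark that no structural shortcut is available there, Lemma~\ref{simplicial} itself supplies one: deleting a degree-$4$ vertex and then a degree-$3$ vertex gives $\alpha^*(K_{4,4})\geq(2^4-2)\,\alpha^*(K_{3,4})\geq(2^4-2)(2^3-2)\,\alpha^*(K_{3,3})=84\,\alpha^*(K_{3,3})$, so it suffices to show $\alpha^*(K_{3,3})\geq 49$. A hand computation over the $2^9=512$ edge subsets of $K_{3,3}$, using $T_G(0,2)=\sum_{A\subseteq E}(-1)^{c(A)-1}$ and grouping subsets by size, gives $\alpha^*(K_{3,3})=102$, whence $\alpha^*(K_{4,4})\geq 8568>4096$. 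Better still, since $102\geq 81=\tau(K_{3,3})$, you can re-base the entire thick-regime induction at $K_{3,3}$: for $n=3$ the step inequality $2^3-2\geq 3\left(\tfrac{m}{m-1}\right)^{2}$ holds for all $m\geq 4$, so $\alpha^*(K_{3,m})\geq\tau(K_{3,m})$ for all $m\geq 3$, and the diagonal steps $K_{n,n}\to K_{n-1,n}$ then never leave the $\alpha^*$-regime. With that restructuring, $K_{3,3}$, not $K_{4,4}$, is the only case requiring direct verification, and the thin regime shrinks to $n=2$.
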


  \subsection{Catalan matroids}
   A \emph{Dyck path} of length $2n$ is a path in the plane from (0,0) to
   ($2n$,0), with steps (1,1), called \emph{up-steps}, and (1,-1),
   called \emph{down-steps}. It is well-known that the number of Dyck
   paths of length $2n$ is the Catalan number
   $C_n=\frac{1}{n+1}\binom{2n}{n}$. Each Dyck path $P$ defines an
   \emph{up-step set}, consisting of the integers $i$, $1\leq i\leq 2n$, for
   which the $i$th-step of $P$ is an up-step. The collection of up-step
   sets of all Dyck paths of length $2n$ forms the bases of a matroid $M_{n}$
   over $\{1,2,\ldots, 2n\}$. These matroids are called Catalan
   matroids and have recently been studied extensively,
   see~\cite{AnaBonin} or~\cite{Ardila}.

   We consider the matroids $N_{n}$, $n\geq 2$,  obtained form $M_{n}$ by
   deleting the elements 1 and $2n$. This corresponds to deleting the
   loop and isthmus of $M_{n}$. From the results in~\cite{AnaBonin} it
   follows that the matroid $N_{n}$ is
   self-dual, but not identically self-dual.  An expression for the Tutte
   polynomial of $N_{n}$ follows from Corollary~5.8
   of~\cite{AnaBonin}.
   \[
    T_{N_{n}}(x,y)=\sum_{i,j>0}\frac{i+j-2}{n-1}\binom{2n-i-j-1}{n-i-j+1}x^{i-1}y^{j-1}.
    \]
    After some algebraic manipulations we get a formula for the
    evaluation at (2,0) and (0,2).
    \[ T_{N_{n}}(2,0)=T_{N_{n}}(0,2)=\sum_{k=0}^{m} \frac{k}{m}
     \binom{2m-k-1}{m-k} 2^{k},
    \]
     where $m=n-1$. This quantity equals $\binom{2m}{m}$ by the
     following list of equalities
   \begin{equation*}
     \begin{split}
      \sum_{k=0}^m \frac km \binom {2m-k-1}{m-k} 2^k &=
      \sum_{k=0}^m \bigg( \binom{2m-k-1}{m-1} - \binom{2m-k-1}m \bigg) 2^k\\
      &= \sum_{k=0}^m \sum_{j=0}^k \bigg( \binom{2m-k-1}{m-1} -
      \binom{2m-k-1}m \bigg) \binom {k}{j} \\
      &= \sum_{j=0}^m \sum_{k=j}^m \bigg( \binom{2m-k-1}{m-1} -
      \binom{2m-k-1}m \bigg) \binom {k}{j} \\
      &= \sum_{j=0}^m \bigg( \binom{2m}{m+j} - \binom{2m}{m+j+1} \bigg)\\
      &=\binom{2m}{m}.
     \end{split}
   \end{equation*}
   The key step in the middle uses the convolution identity
   $\sum_{k=0}^{2m-1} \binom {2m-k-1}{q} \binom {k}{j}$   =
   $\binom{2m}{q+j+1}$ that is the basic identity (5.6)
   in~\cite{concrete}. The  value of $T_{N_{n}}(1,1)$ is clearly
   $C_n=\frac{1}{n+1}\binom{2n}{n}$.

   \begin{thm}\label{thm:catalan}
     For all $n\geq 2$, $N_{n}$  satisfies equation~(\ref{eq:MW}).
   \end{thm}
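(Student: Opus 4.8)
The plan is to exploit the explicit evaluations derived immediately before the statement, which have already reduced the problem to a comparison of two specific numbers. Recall that those computations give $T_{N_n}(2,0)=T_{N_n}(0,2)=\binom{2m}{m}$ with $m=n-1$, and $T_{N_n}(1,1)=C_n=\frac{1}{n+1}\binom{2n}{n}$. Since the two evaluations at $(2,0)$ and $(0,2)$ coincide, the maximum in \eqref{eq:MW} equals their common value, so establishing the conjecture for $N_n$ is exactly the single binomial inequality
\[
 \binom{2n-2}{n-1}\geq \frac{1}{n+1}\binom{2n}{n}.
\]
First I would simply record this reduction.

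Next I would compute the ratio of the two sides in closed form. Cancelling factorials, using $(2n)!=(2n)(2n-1)(2n-2)!$ and $n!=n\,(n-1)!$, gives
\[
 \frac{\binom{2n-2}{n-1}}{\binom{2n}{n}}=\frac{n}{2(2n-1)},
\]
so that
\[
 \frac{\binom{2n-2}{n-1}}{C_n}=(n+1)\cdot\frac{n}{2(2n-1)}=\frac{n(n+1)}{2(2n-1)}.
\]
The desired inequality is therefore equivalent to $n(n+1)\geq 2(2n-1)$, which rearranges to $(n-1)(n-2)\geq 0$. This plainly holds for every $n\geq 2$, with equality precisely at $n=2$ (where both sides equal $2$), and this completes the proof.

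I expect there to be essentially no obstacle at this stage: the only content is the elementary factorial cancellation and the resulting quadratic inequality. All of the genuine difficulty is front-loaded into the derivation of the Tutte-polynomial evaluations—in particular the closed form $T_{N_n}(2,0)=\binom{2n-2}{n-1}$ obtained via the convolution identity—which is carried out above. Once those evaluations are in hand, \eqref{eq:MW} for the Catalan matroids $N_n$ is just a one-line comparison of a central binomial coefficient with a Catalan number, and the fact that equality occurs at $n=2$ serves as a useful sanity check on the computed formulas.
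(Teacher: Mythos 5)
Your proof is correct and follows essentially the same route as the paper: the paper's own argument consists precisely of the evaluations $T_{N_n}(2,0)=T_{N_n}(0,2)=\binom{2n-2}{n-1}$ and $T_{N_n}(1,1)=C_n$ derived just before the statement, with the final comparison left implicit. You have merely made that comparison explicit via the ratio $\frac{n(n+1)}{2(2n-1)}\geq 1$, i.e.\ $(n-1)(n-2)\geq 0$, which is a correct and welcome filling-in of the omitted arithmetic.
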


Notice that in all of the classes that we have considered, either the
ground set contains two disjoint bases or is the union of two
bases. We therefore propose the following conjecture which is a weaker
form of Conjecture~\ref{old} and may turn out to be more tractable.

\begin{conjecture}\label{new}
 If  $M$ contains two disjoint bases or its ground set  is the union
 of two bases then $\max\{ T_{M}(2,0),\ T_{M}(0,2) \} \geq T_{M}(1,1)$.
\end{conjecture}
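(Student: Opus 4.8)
The plan is to remove the maximum by duality and then attack a single one-sided inequality. Every matroid satisfying the hypothesis either contains two disjoint bases or has its ground set equal to a union of two of its bases; the latter condition, via the correspondence $B\mapsto E\setminus B$, is exactly the statement that $M^{*}$ contains two disjoint bases. Since $T_{M}(x,y)=T_{M^{*}}(y,x)$ interchanges $T_{M}(2,0)$ with $T_{M}(0,2)$ while fixing $T_{M}(1,1)$, it suffices to prove the following reduced statement: if $M$ contains two disjoint bases, then $T_{M}(0,2)\geq T_{M}(1,1)$. Indeed, in the disjoint-bases case this gives $\max\{T_{M}(2,0),T_{M}(0,2)\}\geq T_{M}(0,2)\geq T_{M}(1,1)$ directly, and in the union case applying the reduced statement to $M^{*}$ gives $T_{M}(2,0)=T_{M^{*}}(0,2)\geq T_{M^{*}}(1,1)=T_{M}(1,1)$.

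First I would recast the reduced inequality through Definition~\ref{def:activities} and Theorem~\ref{thm:max_degree}. Writing $d=m-r$, the hypothesis forces $t_{ij}=0$ whenever $i+j>d$, and the goal becomes the purely combinatorial statement
\[
  T_{M}(0,2)=\sum_{j=0}^{d} t_{0j}\,2^{j}\ \geq\ \sum_{i,j} t_{ij}=T_{M}(1,1).
\]
It is worth recording why the crude estimate behind Theorem~\ref{main} cannot be pushed to $a=1$: the leading coefficient $t_{0d}=1$ only yields $T_{M}(0,2)\geq 2^{d}$, and $2^{d}$ may be smaller than the number of bases. Already for $U_{r,2r}$ with $r\geq 2$ one has $T_{M}(1,1)=\binom{2r}{r}>2^{r}=2^{d}$. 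Hence any proof must genuinely use the lower coefficients $t_{0j}$ with $j<d$, precisely the information that the multiplicative argument of Theorem~\ref{main} throws away.

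My main line of attack would then be an induction by deletion and contraction, mirroring Lemma~\ref{convexity_contraction_deletion}: for $e$ neither a loop nor a coloop we have $T_{M}(0,2)=T_{M\setminus e}(0,2)+T_{M/e}(0,2)$ and likewise at $(1,1)$, so the one-sided inequality is inherited once both minors satisfy it. Contraction is benign: the criterion for two disjoint bases, namely $m-|A|\geq 2(r-r(A))$ for all $A$, applied to the set $A\cup\{e\}$ shows that $M/e$ again contains two disjoint bases whenever $e$ is not a loop. The obstacle is deletion: removing a non-coloop lowers the margin $m-r$ by one while fixing the rank, so the same criterion applied to $A=\emptyset$ fails for $M\setminus e$ exactly when $m=2r$. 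The difficulty is therefore concentrated in the \emph{tight} matroids, those admitting no legal deletion (prototypically $m=2r$, where $E$ is partitioned by two bases), on which the induction stalls and one must argue directly.

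For the tight case I would attempt a direct injective proof of the combinatorial inequality. Writing $I(B)$ and $E(B)$ for the numbers of internally and externally active elements of a basis $B$, Definition~\ref{def:activities} gives $T_{M}(0,2)=\sum_{B:\,I(B)=0}2^{E(B)}$, which counts the pairs $(B,S)$ with $B$ a basis of internal activity zero and $S$ a subset of the externally active elements of $B$, while $T_{M}(1,1)$ counts all bases. The task is to construct an injection from the set of all bases into this set of pairs, using a partition $E=B_{1}\sqcup B_{2}$ to guarantee that internally inactive bases carry enough externally active elements to absorb every basis. Building this injection --- equivalently, controlling the joint distribution of internal and external activities for matroids with two disjoint bases rather than only its top-degree behaviour --- is the heart of the matter and the step I expect to be the genuine obstacle; it is exactly where the problem departs from the easy range $a\geq 2$.
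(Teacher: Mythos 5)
You should first be clear about the status of the statement: this is Conjecture~\ref{new}, and the paper contains \emph{no proof of it} --- it is posed explicitly as an open problem, offered as a hopefully more tractable weakening of Conjecture~\ref{old}. So the question is whether your attempt closes it, and by your own admission it does not. The ingredients you do supply are sound and consistent with the paper's framework: the duality reduction is correct, since $E$ is the union of two bases $B_1,B_2$ of $M$ exactly when $E\setminus B_1$ and $E\setminus B_2$ are disjoint bases of $M^*$, and $T_M(x,y)=T_{M^*}(y,x)$; note, though, that this commits you to the one-sided inequality $T_M(0,2)\geq T_M(1,1)$, which is formally \emph{stronger} than the stated conjecture (it is the $a=1$ case of Theorem~\ref{main}, whereas the conjecture only asks for the maximum). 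Your observation that contraction of a non-loop preserves Edmonds' criterion (apply the inequality for $M$ to $A\cup\{e\}$) is correct, and your $U_{r,2r}$ computation correctly diagnoses why the proof of Theorem~\ref{main} dies at $a=1$: that argument, via Theorem~\ref{thm:max_degree}, discards every coefficient $t_{ij}$ with $i+j<m-r$, and $\binom{2r}{r}>2^r$ shows this loss is fatal.

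The genuine gap is exactly where you flag it, and it is not a finishing detail but the entire content of the conjecture. First, your base case is larger and less structured than you suggest: a given non-coloop $e$ is deletable iff \emph{no} set $A\subseteq E\setminus\{e\}$ is tight for Edmonds' criterion ($m-|A|=2(r-r(A))$), and $A=\emptyset$ is only one instance; for example $U_{1,2}\oplus U_{2,n}$ has two disjoint bases with $m>2r$, yet deleting either element of the $U_{1,2}$ summand creates a coloop. You give no argument that matroids admitting no legal deletion are confined to, or even resemble, the partition case $m=2r$, so the class on which the induction stalls is uncharacterized. Second, on that class you propose an injection from the set of all bases into pairs $(B,S)$ with $B$ of internal activity zero and $S$ a subset of the externally active elements of $B$ --- but no construction is offered, and controlling the joint distribution of internal and external activities for matroids with two disjoint bases is precisely the open problem; nothing in the paper's machinery (Definition~\ref{def:activities}, Theorem~\ref{thm:max_degree}, Lemma~\ref{convexity_contraction_deletion}) provides it outside the coloopless paving case, where the paper instead proves the stronger convexity statement of Theorem~\ref{convexity_Tutte}. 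In short: a reasonable reduction program, honestly presented, but the conjecture remains unproved after it.
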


 \section{Conclusion and Discussion}

 We have proved that $T_M$ is convex along the portion of the line $x+y=p$
 lying in the positive quadrant, whenever $M$ is a coloopless paving
 matroid.  By Definition~\ref{def:activities},
 $T_M$  is convex along the semilines $y-mx+b$ for $m\geq 0$ and $b\in
 \mathbb{R}$ in the positive quadrant. It is natural to ask for which
 matroids is $T_M$ convex in the positive quadrant?

There is no clear link between convexity of the Tutte polynomial in
the positive quadrant and the classes of matroids that we have
considered. Coloopless paving matroids may or may not have Tutte
polynomials that are convex in the positive quadrant. For example, the
Tutte polynomials of uniform matroids and the graphic matroid $M(K_4)$
are convex in the positive quadrant; on the other hand the Tutte
polynomial $y^{l}(y^k+\ldots +y+x)$ of the
 paving matroid  $U_{1,k+1}\oplus U_{0,l}$, where $l\geq 1$  and
 $k\geq 1$ is not a convex or concave  function. There are also
 non-paving matroids whose
Tutte polynomial is convex, for example $U_{n,n}^2$, for $n\geq 3$,
the 2-thickening of $U_{n,n}$. The Tutte polynomial of this matroid is
$(x+y)^n$ which is clearly convex. Note however that this latter class
of matroids has two disjoint bases.

Establishing the convexity of the Tutte polynomials of matroids within
a given large class seems to be a difficult problem. The Tutte
polynomials of the graphs at the
top of Fig.~1 are convex functions while the Tutte
polynomial of the
graph at the bottom is neither convex nor concave. A similar situation holds
for the matroids in Fig.~2,  the Tutte polynomials
of the two matroids at
the top of the figure are convex functions while the polynomial for the
matroid at the bottom is neither convex nor concave.

   \begin{figure}
    \begin{center}
\psset{unit=0.6}
\pspicture(-5,-5)(15,3)

\qdisk(1,0){0.14}\qdisk(2,-0.1){0.14}\qdisk(3,-0.08){0.14}\qdisk(3,-0.32){0.14}
\qdisk(4,-0.3){0.14}\qdisk(4,-0.54){0.14}\qdisk(4,-0.06){0.14}\qline(1,0)(4,-0.3)

\qdisk(-1,0){0.14}\qdisk(-1,0.24){0.14}\qdisk(-1,-0.24){0.14}\qdisk(-2,0.1){0.14}
\qdisk(-3,0.2){0.14}\qdisk(-4,0.3){0.14}\qline(-1,0)(-4,0.3)

\qdisk(-1.5,-1.5){0.14}\qdisk(-0.5,-1.6){0.14}\qdisk(0.5,-1.7){0.14}\qdisk(1.5,-2.16){0.14}
\qdisk(1.5,-1.92){0.14}\qdisk(1.5,-1.68){0.14}\qdisk(1.5,-1.44){0.14}\qline(1.5,-1.8)(-1.5,-1.5)
\rput(0,-4.5){Figure 1}

\qdisk(11,0){0.14}
\qdisk(14,0){0.14}
\qdisk(12.5,2.6){0.14}
\qline(11,0)(14,0)
\qline(14,0)(12.5,2.6)
\psbezier(12.5,2.6)(13.52,2.03)(14.02,1.17)(14,0)
\psbezier(12.5,2.6)(12.48,1.43)(12.98,0.57)(14,0)
\psbezier(12.5,2.6)(11.48,2.03)(10.98,1.17)(11,0)
\psbezier(12.5,2.6)(12.52,1.43)(12.02,0.57)(11,0)

\qdisk(9,0){0.14}
\qdisk(6,0){0.14}
\qdisk(7.5,2.6){0.14}
\qline(9,0)(6,0)
\qline(6,0)(7.5,2.6)
\psbezier(7.5,2.6)(8.52,2.03)(9.02,1.17)(9,0)
\psbezier(7.5,2.6)(7.48,1.43)(7.98,0.57)(9,0)

\qdisk(10,-1){0.14}
\qdisk(11.5,-3.6){0.14}
\qdisk(8.5,-3.6){0.14}
\qline(10,-1)(11.5,-3.6)
\qline(10,-1)(8.5,-3.6)
\qline(8.5,-3.6)(11.5,-3.6)
\psbezier(10,-1)(11.02,-1.57)(11.52,-2.43)(11.5,-3.6)
\psbezier(10,-1)(9.98,-2.17)(10.48,-3.03)(11.5,-3.6)

\rput(10,-4.5){Figure 2}
\endpspicture
\end{center}

    \end{figure}

  We proved  Conjecture~\ref{old} for some families of
  graphs and matroids.  There are some more families for which the
  conjecture holds: for example Marc Noy
  (private communication) proved that $\tau(G)\leq \alpha(G)$ when $G$
  is a maximal   outerplanar graph,
  using equation~(\ref{eq:deletion-contraction}).

\section{Acknowledgment}
  We thank Bill Jackson for helpful discussions.

\end{document}